\theoremstyle{plain}
\newtheorem{theorem}[thm]{Theorem}
\newtheorem{proposition}[thm]{Proposition}
\newtheorem{corollary}[thm]{Corollary}
\newtheorem{lemma}[thm]{Lemma}
\theoremstyle{definition}
\newtheorem{definition}[thm]{Definition}
\newtheorem{remark}[thm]{Remark}
\newtheorem{expl}[thm]{Example}
\tikzset{different/.style={dashed, line width=0.2mm}}
\newcommand{\systemAxis}[4]{
   \draw (#1-0.2,#2) -- (#1+#3,#2);
   \draw (#1,#2-0.2) -- (#1,#2+#4);
}
\newcommand{\mynode}[4]{ 
  \draw (#1,#2) circle (0.09) node (#3) {} #4;
}
\newcommand{\myhposition}[4]{ 
  \draw (#1,#2) -- (#1,#2-0.2) node (#3) {} #4;
}
\newcommand{\myvposition}[4]{ 
  \draw (#1,#2) -- (#1-0.2,#2) node (#3) {} #4;
}
\newcommand{\bC}{\mathfrak{C}}
\newcommand{\bB}{\mathfrak{B}}
\newcommand{\bA}{\mathfrak{A}}
\DeclareMathOperator{\mi}{mi}
\DeclareMathOperator{\Th}{Th}
\DeclareMathOperator{\Aut}{Aut}
\DeclareMathOperator{\Pol}{Pol}
\DeclareMathOperator{\Csp}{CSP}
\DeclareMathOperator{\Age}{Age}
\DeclareMathOperator{\AND}{\wedge}
\DeclareMathOperator{\OR}{\vee}
\newcommand{\ignore}[1]{}
\newcommand{\cK}{\mathcal{K}}
\newcommand{\Q}{\mathbb{Q}}
\newcommand{\N}{\mathbb{N}}
\newcommand{\set}[1]{\left\{#1\right\}}
\newcommand{\mybar}[1]{#1}
\newcommand{\ceq}{\mathrel{\vcentcolon =}}  
\newcommand{\cequiv}{\mathrel{\vcentcolon\equiv}}
\newcommand{\Fresse}{Fra\"{i}ss\'{e}}
\begin{document}

\title[The Complexity of Combinations of Qualitative CSPs]{The Complexity of Combinations of Qualitative Constraint Satisfaction Problems}
\titlecomment{Both authors have received funding from the European Research Council (ERC  Grant Agreement no. 681988), the German Research Foundation (DFG, project number 622397), and the DFG Graduiertenkolleg 1763 (QuantLA). In contrast to the conference version~\cite{BodirskyGreiner18}, this is the full article with all proofs and a stronger main result (Theorem~\ref{thm:r2}).}

\author{Manuel Bodirsky}
\address{Institut f\"ur Algebra, Technische Universit\"at Dresden, Germany}

\author{Johannes Greiner}


\keywords{Complexity, Combination, Constraint Satisfaction Problem, CSP, First-order Theory, countably categorical, homogeneous, Nelson-Oppen}


\begin{abstract}
The CSP of a first-order theory $T$ is 
the problem of deciding for a given 
finite set $S$ of atomic formulas whether $T \cup S$ is satisfiable. 
Let $T_1$ and $T_2$ be two theories 
with countably infinite models and disjoint signatures. 
Nelson and Oppen presented conditions that imply 
decidability (or polynomial-time decidability) of 
$\Csp(T_1 \cup T_2)$ 
under the assumption that $\Csp(T_1)$ 
and $\Csp(T_2)$ are decidable (or polynomial-time decidable). 
We show that for a large class of $\omega$-categorical theories $T_1, T_2$ the Nelson-Oppen conditions are not only sufficient, but also necessary for polynomial-time tractability of $\Csp(T_1 \cup T_2)$ (unless P=NP).  
\end{abstract}

\maketitle

\section{Introduction}
Two independent proofs of the finite-domain constraint satisfaction tractability conjecture have recently been published by Bulatov and Zhuk~\cite{BulatovFVConjecture,ZhukFVConjecture}, settling the Feder-Vardi dichotomy conjecture.  In contrast, the computational complexity of constraint satisfaction problems over infinite domains cannot be classified in general~\cite{BodirskyGrohe}.
However, for a restricted class  of constraint satisfaction problems that strictly contains  all finite-domain CSPs and captures the vast majority of the problems studied in \emph{qualitative reasoning} (see the survey article~\cite{Qualitative-Survey}) there also is a tractability conjecture (see~\cite{BPP-projective-homomorphisms,wonderland,BartoPinskerDichotomy,BKOPP}).
The situation is similar to the situation 
for finite-domain CSPs before Bulatov and Zhuk: there is a formal condition which provably implies NP-hardness, and the conjecture is that every other CSP in the class is in P. 

For finite domain CSPs, it turned out that only few fundamentally different algorithms were needed to complete the classification; the key in both the solution of Bulatov and the solution of Zhuk
was a clever combination of the existing algorithmic ideas. An intensively studied method for obtaining (polynomial-time) decision procedures for infinite-domain CSPs is the Nelson-Oppen combination method; see, e.g.,~\cite{BaaderSchulz, NelsonOppen79}.
The method did not play any role for the classification of finite-domain CSPs, but is extremely powerful for combining algorithms for infinite-domain CSPs. 

In order to conveniently state what type of combinations of CSPs can be studied with the Nelson-Oppen method, we slightly generalise
the notion of a CSP. The classical definition is to fix an infinite structure $\bB$ with finite relational signature $\tau$; then $\Csp(\bB)$ is the computational problem of deciding whether a given finite set of \emph{atomic $\tau$-formulas} (i.e., formulas of the form $x_1=x_2$ or of the form $R(x_1,\dots,x_n)$ for $R \in \tau$ and variables $x_1,\dots,x_n$) is satisfiable in $\bB$. 
Instead of fixing a $\tau$-structure $\bB$, we fix a \emph{$\tau$-theory $T$} (i.e., 
a set of first-order $\tau$-sentences). 
Then $\Csp(T)$ is the computational problem of deciding for a given finite set $S$ of atomic $\tau$-formulas whether 
$T \cup S$ has a model. Clearly, this is a generalisation of the classical definition since $\Csp(\bB)$ is the same as $\Csp(\Th(\bB))$ where $\Th(\bB)$ is the \emph{first-order theory of $\bB$}, i.e., the set of all first-order sentences that hold in $\bB$.
The definition for theories is \emph{strictly} more expressive (we give an example in Section~\ref{sect:gen-comb} that shows this). 

\ignore{Let $T$ be a first-order theory
over a finite relational signature $\tau$. 
Then the problem $\Csp(T)$ is the computational problem of deciding 
 for a given finite set of atomic 
 $\tau$-formulas
 $\{\psi_1,\dots,\psi_m\}$ whether $T \cup \{\exists \bar x (\psi_1 \wedge \cdots \wedge \psi_m)\}$ is satisfiable.}
 
Let $T_1$ and $T_2$ be two theories 
with disjoint finite relational signatures
$\tau_1$ and $\tau_2$. 
We are interested in the question when
$\Csp(T_1 \cup T_2)$ can be solved in polynomial time; we refer to this problem as
the \emph{combined CSP} for $T_1$ and $T_2$. Clearly, if $\Csp(T_1)$
or $\Csp(T_2)$ 
is NP-hard, then $\Csp(T_1 \cup T_2)$ is NP-hard, too. Suppose now that $\Csp(T_1)$ 
and $\Csp(T_2)$ 
can be solved in polynomial-time. 
In this case, there are examples where $\Csp(T_1 \cup T_2)$ is in P, and
examples where $\Csp(T_1 \cup T_2)$ is NP-hard. Even if we know the complexity of $\Csp(T_1)$ and of $\Csp(T_2)$, a classification of the complexity of $\Csp(T_1 \cup T_2)$ for arbitrary 
theories $T_1$ and $T_2$ is too ambitious (see Section~\ref{sect:non-class} for a formal justification). 
But such a classification 
should be feasible
at least for the mentioned class of infinite-domain CSPs for which the tractability conjecture applies.

\subsection{Qualitative CSPs} 
\label{sect:qcsps}
The idea of \emph{qualitative formalisms} is
that reasoning tasks (e.g., about space and time) is not performed with absolute numerical values, but rather with \emph{qualitative} predicates (such as \emph{within}, \emph{before}, etc.).
There is no universally accepted definition in the literature that defines what a \emph{qualitative CSP} is, but a proposal has been made in~\cite{Qualitative-Survey}; the central mathematical property   
for this proposal is \emph{$\omega$-categoricity}. 
A theory is called \emph{$\omega$-categorical} 
if it has up to isomorphism only one countable model. 
A structure is called \emph{$\omega$-categorical} if and only if its first-order theory is $\omega$-categorical. 
Examples are $({\mathbb Q};<)$, Allen's Interval Algebra, and more generally all homogeneous
structures with a finite relational
signature
(a structure $\bB$ is called \emph{homogeneous} if all isomorphisms between finite  substructures 
can be extended to an automorphism; see~\cite{HodgesLong,Bodirsky-HDR-v8}). 
The class of CSPs
for $\omega$-categorical theories 
arguably coincides with the class of CSPs for \emph{qualitative formalisms}
studied e.g.\ in temporal and spatial reasoning; see~\cite{Qualitative-Survey}. 

For an $\omega$-categorical theory $T$,
the complexity of $\Csp(T)$  can be studied using the universal-algebraic approach that led to the proof of the Feder-Vardi dichotomy conjecture. A \emph{signature} is a set of function and relation symbols together with an arity for each symbol. Let $\tau$ be a signature. A \emph{$\tau$-structure} $\bB$ is a set $B$ together with an $n$-ary function on $B$ for each $n$-ary function symbol in $\tau$ and an $n$-ary relation over $B$ for each $n$-ary relation symbol in $\tau$.
  All signatures of structures in this article are assumed to be countable.
One of the central concepts for the universal-algebraic approach is the concept of a \emph{polymorphism} of a structure $\bB$,
i.e., a homomorphism from $\bB^k$ to $\bB$ for some $k \in {\mathbb N}$. 
It is known that the polymorphisms of a finite structure $\bB$ fully capture the complexity of $\Csp(\bB)$ up to P-time reductions (in fact, up to Log-space reductions; see~\cite{dagstuhlCSP18} for a collection of survey articles about the complexity of CSPs), and the same is true for structures $\bB$ with an $\omega$-categorical theory~\cite{BodirskyNesetrilJLC}. 
In order to understand when we can apply the universal-algebraic approach to study the complexity of $\Csp(T_1 \cup T_2)$, we need to understand the following fundamental question.

\medskip
{\bf Question 1:} Suppose that $T_1$ and $T_2$ are 
theories with disjoint finite relational signatures $\tau_1$ and $\tau_2$. When is there an $\omega$-categorical $(\tau_1 \cup \tau_2)$-theory $T$ 
such that $\Csp(T)$ equals $\Csp(T_1 \cup T_2)$, i.e., for all sets $S$ of atomic $(\tau_1 \cup \tau_2)$-formulas,
we have that $S \cup T$ is satisfiable if and only if $S \cup (T_1 \cup T_2)$ is satisfiable? 

\medskip 
Note that $\omega$-categorical theories are \emph{complete}, i.e., for every first-order sentence $\phi$ either $T$ implies $\phi$ 
or $T$ implies $\neg \phi$. 
In general, it is not true that $\Csp(T_1 \cup T_2)$ equals $\Csp(T)$ for a complete theory $T$ (we present an example in Section~\ref{sect:gen-comb}). 

Question 1 appears to be very difficult in general, in particular when considering the work of Braunfeld~\cite{Braunfeld19} in the context of Proposition~\ref{prop:csp}.
However, we present
a broadly applicable condition for $\omega$-categorical theories 
$T_1$ and $T_2$ with infinite models that implies the existence of an $\omega$-categorical theory $T$ such that $\Csp(T_1 \cup T_2)$ equals $\Csp(T)$ (Proposition~\ref{prop:r1} below). 
The theory $T$ that we construct has many useful properties. In particular $T_1 \cup T_2 \subseteq T$ and 

\begin{enumerate}
\item if $\phi_1(\bar x)$ is a $\tau_1$-formula 
and $\phi_2(\bar x)$ is a $\tau_2$-formula, both with free variables $\bar x = (x_1,\dots,x_n)$, then 
$T \models \exists \bar x (\phi_1(\bar x) \wedge \phi_2(\bar x) \wedge \bigwedge_{i<j} x_i \neq x_j)$ if and only if
$T_1 \models  \exists \bar x (\phi_1(\bar x) \wedge \bigwedge_{i<j} x_i \neq x_j)$ and $T_2 \models  \exists \bar x (\phi_2(\bar x) \wedge \bigwedge_{i<j} x_i \neq x_j)$;  \label{item:satisfiableTogether}
\item For every $(\tau_1 \cup \tau_2)$-formula $\phi$ there exists a conjunction of $\tau_1$ and $\tau_2$ formulas that is equivalent to $\phi$ modulo $T$.\label{item:canSeperateFormula}
\end{enumerate}
In fact, $T$ is uniquely given by these three properties (up to equivalence of theories; see Lemma~\ref{lem:unique}) and again $\omega$-categorical, and we call it the \emph{generic combination of $T_1$ and $T_2$}. 
Let $\bB_1$ and $\bB_2$ be two $\omega$-categorical relational structures whose first-order theories have a generic combination $T$; then we call the (up to isomorphism unique) countably infinite model of $T$ the
\emph{generic combination of  $\bB_1$ and $\bB_2$}.

\subsection{The Nelson-Oppen Criterion}
Let $T_1,T_2$ be theories with disjoint finite signatures $\tau_1,\tau_2$ and suppose that $\Csp(T_1)$ is in P and $\Csp(T_2)$ is in P. 
Nelson and Oppen gave sufficient conditions
for $\Csp(T_1 \cup T_2)$ to be solvable
in polynomial time, too. Their conditions are: 
\begin{enumerate}
\item Both $T_1$ and $T_2$ are \emph{stably infinite}:
a $\tau$-theory $T$ is called \emph{stably infinite}
if for every quantifier-free $\tau$-formula $\phi(x_1,\dots,x_n)$,
if $\phi$ is satisfiable over $T$, then
there also exists an \emph{infinite} model $\bA$
and elements $a_1,\dots,a_n$ such that 
$\bA \models \phi(a_1,\dots,a_n)$. 
\item for $i=1$ and $i=2$, 
the signature $\tau_i$ contains a binary relation symbol $\neq_i$ that denotes the disequality relation, i.e., $T_i$ implies the sentence 
$\forall x,y \, (x \neq_i y \Leftrightarrow \neg(x=y))$; 
\item Both $T_1$ and $T_2$ are \emph{convex} (here we follow established terminology). 
A $\tau$-theory $T$ is called \emph{convex} if for every finite set $S$ of atomic $\tau$-formulas
the set 
$T \cup S \cup \{x_1 \neq y_1,\dots,x_m \neq y_m\}$ is satisfiable whenever 
$T \cup S \cup \{x_j \neq y_j\}$ is satisfiable for each $j \leq m$. 
\end{enumerate}
The assumption that a relation symbol denoting the disequality relation is part of the signatures $\tau_1$ and $\tau_2$ is often implicit in the literature treating the Nelson-Oppen method. It would be interesting to explore when it can be dropped, but we will not pursue this question here. 
The central question of this article is the following. 

\medskip

{\bf Question 2.} In which settings are the Nelson-Oppen
conditions (and in particular, the convexity condition) not only sufficient, but also necessary for polynomial-time tractability of the combined CSP?

\medskip 

Again, for general
theories $T_1$ and $T_2$, this is a too ambitious research goal; 
but we will study it for generic combinations
of $\omega$-categorical theories $T_1,T_2$ with infinite models. 
In this setting, 
the first condition that both $T_1$ and $T_2$ are stably infinite is trivially satisfied. 
The third condition on $T_i$, 
convexity, is 
equivalent to the existence of a
binary injective polymorphism of the
(up to isomorphism unique) countably infinite model of $T_i$ (see Theorem~\ref{thm:hdr}). We mention that binary injective polymorphisms played an important role in several recent infinite-domain complexity classifications~\cite{ecsps,Phylo-Complexity,posetCSP16}. 

\subsection{Results} 
To state our results concerning Question 1 and Question 2 we need basic terminology for permutation groups.
A permutation group $G$ on a set $A$ is called 
\begin{itemize}
\item \emph{$n$-transitive} if for all tuples $\bar b, \bar c \in A^n$, each with pairwise distinct entries, there exists a permutation $g \in G$ such that $g(\bar b) = \bar c$ (unary functions applied to tuples or multiple arguments act componentwise and result in tuples).
$G$ is called \emph{transitive} if it is 1-transitive. 
\item \emph{$n$-set-transitive} 
if for all subsets $B,C$ of $A$ with $|B|=|C|=n$ there exists a permutation $g \in G$ such that $g(B) \ceq \{g(b) \mid b \in B\} = C$. 
\end{itemize}
A structure is called \emph{$n$-transitive} (or \emph{$n$-set-transitive}) if its automorphism group is. 
The existence of
generic combinations can be characterised as follows (see Section~\ref{sect:gen-comb} for the proof).

\begin{proposition}\label{prop:r1}
Let $\bB_1$ and $\bB_2$ be countably infinite $\omega$-categorical structures with disjoint relational signatures. Then $\bB_1$ and $\bB_2$ have a generic combination if and only if either both $\bB_1$ and $\bB_2$ do not have algebraicity  (in the model-theoretic sense; see Section~\ref{sect:gen-comb})
or one of $\bB_1$ and $\bB_2$ does have algebraicity and the other has an automorphism group which is $n$-transitive for all  $n \in {\mathbb N}$.
\end{proposition}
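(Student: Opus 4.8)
\section*{Proof proposal}

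The statement is an equivalence, so I would prove both implications, after recording two facts that are used on both sides. First, for an $\omega$-categorical $\bB$ the automorphism group $\Aut(\bB)$ is $n$-transitive for all $n$ if and only if $\Aut(\bB)=\Sym(B)$: a closed highly transitive group contains, for every finite tuple, an element agreeing with any prescribed injective image, hence equals $\Sym(B)$ by taking closures. Such a $\bB$ has every relation definable from equality, and in particular has \emph{no} algebraicity (the pointwise stabiliser of a finite set acts as the full symmetric group on the complement). Second, a homogeneous $\omega$-categorical structure has no algebraicity if and only if its age has the \emph{strong} (disjoint) amalgamation property; I would use this as a standard fact. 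Throughout I would assume without loss of generality that $\bB_1,\bB_2$ are homogeneous, expanding each by a relation for every orbit of tuples (these are $\emptyset$-definable by Ryll--Nardzewski, so this changes neither $\Aut$ nor $\acl$, nor the generic combination up to equivalence).

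For the direction ``$\Leftarrow$'' I would construct a generic combination in each case. In the case where both $\bB_1,\bB_2$ have no algebraicity, I would form the superposition class $\cK$ of all finite $(\tau_1\cup\tau_2)$-structures whose $\tau_i$-reduct lies in $\Age(\bB_i)$. The hereditary and joint embedding properties are immediate; the crux is amalgamation. Given an amalgamation problem in $\cK$ with common part $\mathfrak D$, I would strongly amalgamate the two $\tau_1$-reducts and the two $\tau_2$-reducts \emph{separately}, each on the single common universe $B\sqcup_{D}C$ (this is possible precisely because no algebraicity supplies strong amalgamation), and then superpose the two structures, which is legitimate as the signatures are disjoint. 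Oligomorphy of both structures bounds the number of $n$-element members of $\cK$, so the \Fresse{} limit $\bB$ is $\omega$-categorical with $\tau_i$-reduct $\bB_i$; properties~(\ref{item:satisfiableTogether}) and~(\ref{item:canSeperateFormula}) then follow because $\cK$ is the \emph{full} superposition (every product of realisable reduct types is realised) and because, by homogeneity, the combined type of a tuple is determined by its two reduct types, each of which is $\tau_i$-definable. In the remaining case one side, say $\bB_2$, has $\Aut(\bB_2)=\Sym(B_2)$, so each $\tau_2$-relation is a union of equality patterns; interpreting these patterns on $B_1$ expands $\bB_1$ to the desired $\bB$, and properties~(\ref{item:satisfiableTogether}),~(\ref{item:canSeperateFormula}) are trivial since on tuples with distinct entries every $\tau_2$-atom is constant.

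For the direction ``$\Rightarrow$'' I would argue contrapositively: if we are in neither case, then (after possibly swapping) $\bB_1$ has algebraicity and $\bB_2$ is not $n$-transitive for some $n$, and I would derive that no generic combination $\bB$ exists. Assuming $\bB$ exists, completeness of $T_1,T_2$ together with $T_1\cup T_2\subseteq T$ forces $\bB|_{\tau_i}\cong\bB_i$, whence $\Aut(\bB)\le\Aut(\bB_1)\cap\Aut(\bB_2)$ and, by~(\ref{item:canSeperateFormula}), the combined type of a tuple equals the pair of its reduct types. Two reductions drive the contradiction. The first turns algebraicity into a \emph{definable} non-trivial element: taking $b\in\acl_{\bB_1}(\bar a)\setminus\bar a$ with finite orbit $S$, I would move all but one element of $S$ into the parameter tuple, so that the remaining element becomes $\emptyset$-definable over the enlarged tuple $\bar a'$. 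The second records that $\bB_2$ not being $n$-transitive for all $n$ means some finite tuple admits at least two one-point $\tau_2$-extension types, a property that persists at every larger length. I would then pad $\bar a'$ (keeping $b$ definable) to the length of a $\bB_2$-type $q_2$ with two distinct extension types, and invoke property~(\ref{item:satisfiableTogether}) to realise, over a base of $\tau_1$-type $\operatorname{tp}_{\tau_1}(\bar a')$ and $\tau_2$-type $q_2$, each of these two $\tau_2$-extension types \emph{on the definable point $b$}. But two bases of the same combined type lie in one $\Aut(\bB)$-orbit, and an automorphism between them fixes the definition of $b$ and preserves the $\tau_2$-type of $(\bar a',b)$, so only one extension type can actually occur~--- a contradiction.

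The main obstacle, as I see it, is the necessity direction, and within it the combinatorial core of isolating a single definable point and playing it against the failure of high transitivity; the reduction from arbitrary algebraicity to one definable element is what makes property~(\ref{item:satisfiableTogether}) bite. On the existence side the only delicate point is verifying that the two reduct amalgamations can always be carried out simultaneously on one common universe, which is exactly the content of the no-algebraicity $\Leftrightarrow$ strong-amalgamation equivalence.
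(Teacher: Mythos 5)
Your proposal is correct and follows essentially the same route as the paper: your sufficiency direction is the paper's Lemma~\ref{lem:combine-2} (expansion by orbits, superposition class, strong amalgamation, \Fresse\ limit) together with the same trivial treatment of the highly transitive case, and your necessity argument --- reducing algebraicity to a single definable point over enlarged parameters, extracting two inequivalent one-point extensions from the failure of $n$-transitivity, realising both over a common base via property~(\ref{item:satisfiableTogether}), and letting an automorphism of $\bB$ obtained from property~(\ref{item:canSeperateFormula}) carry the definable point so that only one extension type can occur --- is exactly the paper's contradiction, phrased with types in place of the paper's explicit automorphisms $\beta_i,\gamma_i,\alpha,\mu$. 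The details you compress (same reduct types give the same $\Aut(\bB)$-orbit, and the reduct of the \Fresse\ limit being $\bB_i$) are at the same level of implicitness as in the paper's own proof.
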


Our main result concerns Question 2
for generic combinations $\bB$ of countably infinite
$\omega$-categorical structures $\bB_1$ and $\bB_2$;
as we mentioned before, if the generic combination exists, it is up to isomorphism unique, and again $\omega$-categorical.


Our result does not apply if one of the structures is too inexpressive. To state the expressivity requirement we introduce further terminology. A \emph{primitive positive} formula (\emph{pp-formula}) is a conjunction of atomic formulas where some variables can be existentially quantified. Likewise, a relation $R \subseteq B^n$ is \emph{pp-definable} in a structure $\bB$ if there exists a pp-formula $\phi$ such that $R = \set{\bar{x}\in B^n \mid \bB \models \phi(\bar{x})}$.
We call a tuple \emph{injective} if all its coordinates are pairwise distinct.

For a fixed $\tau$-structure $\bB$ and a $\tau$-formula $\phi(x_1, \dotsc, x_n)$ a map $s \colon \set{x_1, \dotsc, x_n} \rightarrow \bB$ such that $\phi(s(x_1),\dotsc, s(x_n))$ is true in $\bB$ is a \emph{solution} to $\phi$. 
  The expressivity requirement for our result is captured in the following notion.

\begin{definition}\label{def:preventCrosses}
  A structure $\bB$ \emph{can prevent crosses} if there exists a pp-formula $\phi$ such that
  \begin{enumerate}
  \item $\phi(x,y,u,v) \AND x=y$ has a solution $s$ over $\bB$ such that $s(x,u,v)$ is injective,
  \item $\phi(x,y,u,v) \AND u=v$ has a solution $s$ over $\bB$ such that $s(x,y,u)$ is injective,
  \item $\phi(x,y,u,v) \AND x=y \AND u=v$ has no solution over $\bB$.
  \end{enumerate}
  Any such formula $\phi$ will be referred to as a \emph{cross prevention formula} of $\bB$.
\end{definition}

To explain the naming, consider solutions  $s_1,\,s_2$  to a cross prevention formula $\phi$ of a structure $\bB$ with images $(x_1,x_1,u_1,v_1)$ and $(x_2,y_2,u_2,u_2)$ respectively, such that $(x_1,u_1,v_1)$ and $(x_2,y_2,u_2)$ are injective.
Let $f\colon \bB^k \rightarrow \bB$ be a polymorphism of $\bB$. If $R$ is a relation in $\bB$ and $t_1, \dotsc, t_k\in R$, then $f(t_1, \dotsc, t_k) \in R$ because $f$ is a homomorphism ($f$ \emph{preserves} $R$). Preservation of relations transfers to any relation which is pp-definable over $\bB$.
Therefore, there is no binary polymorphism $f$ of $\bB$ such that
  \begin{align*}
    f(x_1,x_2) &=  f(x_1,y_2) \quad \text{and}\\
    f(u_1,u_2) &= f(v_1,u_2),
  \end{align*}
because the arguments of $f$ are in the relation defined by $\phi$, but the image is not.
This means that in Figure~\ref{fig:preventCrosses} either the vertical or the horizontal identity (indicated by dashed lines) cannot hold, i.e., we will not see a cross.

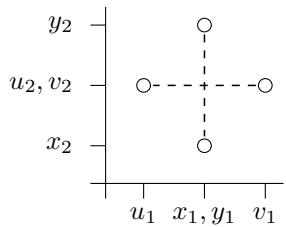
\begin{figure}
  \begin{center}
  \begin{tikzpicture}
    \def\xbetweenspace{0.8};
    \def\ybetweenspace{0.8};
    \def\belowspace{0};
    \def\leftspace{0.1};
    \def\xoffset{0.5};
    \def\yoffset{0.5};
    
    \myhposition{\xoffset+0*\xbetweenspace}{0}{1}{node[align=left, below=\belowspace] {$u_1$}};
    \myhposition{\xoffset+1*\xbetweenspace}{0}{2}{node[align=left, below=\belowspace] {$x_1, y_1$}};
    \myhposition{\xoffset+2*\xbetweenspace}{0}{3}{node[align=left, below=\belowspace] {$v_1$}};
%
    \myvposition{0}{\yoffset+0*\ybetweenspace}{4}{node[align=right, left=\leftspace] {$x_2$}};
    \myvposition{0}{\yoffset+1*\ybetweenspace}{5}{node[align=right, left=\leftspace] {$u_2,v_2$}};
    \myvposition{0}{\yoffset+2*\ybetweenspace}{6}{node[align=right, left=\leftspace] {$y_2$}};

    \mynode{\xoffset+1*\xbetweenspace}{\yoffset+0*\ybetweenspace}{x}{node[align=right, below=\leftspace] {}};
    \mynode{\xoffset+2*\xbetweenspace}{\yoffset+1*\ybetweenspace}{u}{node[align=right, below=\leftspace] {}};
    \mynode{\xoffset+0*\xbetweenspace}{\yoffset+1*\ybetweenspace}{v}{node[align=right, left=\leftspace] {}};
    \mynode{\xoffset+1*\xbetweenspace}{\yoffset+2*\ybetweenspace}{y}{node[align=right, left=\leftspace] {}};

    \draw[different] (y) -- (x);
    \draw[different] (u) -- (v);

    \systemAxis{0.0}{0.0}{\xoffset+2.3*\xbetweenspace}{\yoffset+2.3*\ybetweenspace}; 
    
  \end{tikzpicture}
\end{center}
\caption{An illustration of the property to \emph{prevent crosses} from Definition~\protect\ref{def:preventCrosses}. Dashed edges indicate (potential) equalities between function values.}
\label{fig:preventCrosses}
\end{figure}

To simplify the presentation of our result and its proof, we introduce the following shortcut.
\begin{definition}\label{def:J}\hypertarget{def:J}{}
A countably infinite $\omega$-categorical structure has Property~\emph{$J$} if it 
\begin{itemize}
\item has a relational signature containing a binary symbol for the disequality relation,
\item is 2-set-transitive,
\item can prevent crosses and
\item does not have algebraicity.
\end{itemize}
\end{definition}
\noindent An easy example for such a structure is $(\Q; <, \neq)$. Further examples of structures with \hyperlink{def:J}{Property~$J$} come from
expansions of the countable random tournament (see, e.g., Lachlan~\cite{Lachlan}, see Section~\ref{sect:gen-comb} for the definition of expansion) and the countable homogeneous local order $S(2)$ (also see~\cite{Cherlin}). More examples for structures with \hyperlink{def:J}{Property~$J$} will be presented in Section~\ref{sec:examples}. 

We now state our main result for Question 2. The proof can be found in Section~\ref{sect:bin-inj}.

\begin{theorem}\label{thm:r2}\label{THM:R2}
Let $\bB$ be the generic combination of two countably infinite $\omega$-categorical structures $\bB_1$ and $\bB_2$ with \hyperlink{def:J}{Property~$J$} and finite relational signature such that $\Csp(\bB_1)$ and $\Csp(\bB_2)$ are in P. Then one of the following applies:
\begin{itemize}
\item $\Th(\bB_1)$ or $\Th(\bB_2)$ is not convex; in this case, $\Csp(\bB)$ is NP-hard. 
\item Each of $\Th(\bB_1)$ and $\Th(\bB_2)$ is convex, and $\Csp(\bB)$ is in P. 
\end{itemize}
\end{theorem}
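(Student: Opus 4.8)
The plan is to split Theorem~\ref{thm:r2} into its two claims, using the characterisation of convexity via binary injective polymorphisms (Theorem~\ref{thm:hdr}) as the bridge between the combinatorial and algorithmic sides. For the \emph{tractable case}, suppose both $\Th(\bB_1)$ and $\Th(\bB_2)$ are convex. By Theorem~\ref{thm:hdr} each $\bB_i$ has a binary injective polymorphism, and under \hyperlink{def:J}{Property~$J$} all Nelson--Oppen hypotheses hold: the signatures contain $\neq$, stable infinity is automatic since the models are countably infinite, and convexity is given. One would then verify that the generic combination $\bB$ constructed for Proposition~\ref{prop:r1} is exactly the structure for which the Nelson--Oppen combination procedure is correct, so that $\Csp(\bB)=\Csp(\Th(\bB_1)\cup\Th(\bB_2))$ is solved in polynomial time by running the two component algorithms and propagating the polynomially many disequalities forced by convexity. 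The key here is that properties~\eqref{item:satisfiableTogether} and~\eqref{item:canSeperateFormula} let us decompose any instance over $\tau_1\cup\tau_2$ into a $\tau_1$-part and a $\tau_2$-part whose satisfiability can be decided independently once a consistent equality/disequality pattern on the variables is fixed.

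For the \emph{hardness case}, assume without loss of generality that $\Th(\bB_1)$ is not convex. By Theorem~\ref{thm:hdr} this means $\bB_1$ has \emph{no} binary injective polymorphism, equivalently some pp-definable obstruction witnesses the failure of convexity. I would first extract from non-convexity a concrete pp-definable relation in $\bB_1$ that forces a disjunction of equalities: there is a finite set $S$ of atomic formulas and disequalities $x_j\neq y_j$ such that each $S\cup\{x_j\neq y_j\}$ is satisfiable but $S\cup\{x_1\neq y_1,\dots,x_m\neq y_m\}$ is not. The cleanest route is to reduce this to the binary case, so the goal is to produce a pp-definition in $\bB_1$ of a relation behaving like ``$x=y$ or $u=v$,'' which is precisely the negation of the cross-prevention pattern of Definition~\ref{def:preventCrosses} living on the $\bB_2$ side.

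The heart of the reduction is then to combine this forced-disjunction gadget from $\bB_1$ with a cross-prevention formula $\phi$ supplied by Property~$J$ of $\bB_2$, and use $2$-set-transitivity together with the absence of algebraicity to freely place and identify variables across the two signatures inside $\bB$. Concretely, I would encode an NP-hard problem---most naturally a monotone variant such as \textsc{Not-All-Equal-3-Sat} or positive 1-in-3/graph colouring---so that each clause or edge is represented by a copy of the combined gadget: the $\bB_1$-part forces at least one of two equalities, while the $\bB_2$-part, via the cross prevention formula, forbids \emph{both} equalities from holding simultaneously (properties~(1)--(3) of Definition~\ref{def:preventCrosses} give exactly the two one-sided solutions and the forbidden joint solution). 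This yields an exclusive-or behaviour that propagates a Boolean assignment, and property~\eqref{item:satisfiableTogether} of the generic combination guarantees the gadget is satisfiable precisely in the intended configurations (with the required injectivity among distinct variables), so the reduction is correct.

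The main obstacle I expect is this last gadget construction: ensuring that the $\tau_1$-disjunction and the $\tau_2$-cross-prevention constraints interact correctly inside the generic combination and that the injectivity side-conditions in Definition~\ref{def:preventCrosses} and in property~\eqref{item:satisfiableTogether} can be met simultaneously. This is where $2$-set-transitivity and no-algebraicity do the real work, letting us realise any prescribed pattern of equalities and disequalities on the shared variables without unintended algebraic dependencies collapsing distinct points. Verifying that the overall instance is satisfiable in $\bB$ if and only if the source NP-hard instance is satisfiable---and that the reduction is polynomial---will require careful bookkeeping of which variables are identified, but once the single exclusive-or gadget is shown correct, the global reduction follows by taking one gadget per clause and sharing variables according to the formula.
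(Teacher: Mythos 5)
Your tractable case is sound and coincides with the paper's: convexity of both theories plus Property~$J$ gives all Nelson--Oppen hypotheses, and since the generic combination satisfies property~\eqref{item:satisfiableTogether}, $\Csp(\bB)=\Csp(\Th(\bB_1)\cup\Th(\bB_2))$ is in P. The hardness case, however, has genuine gaps. First, the completeness of your reduction has no supporting tool: the target instance consists of many copies of the non-convexity gadget $R$ (a $\tau_1$-pp-formula) and of the cross-prevention formula $\phi$ (a $\tau_2$-pp-formula) overlapping on shared variables, and to show that a satisfiable source instance yields a satisfiable instance of $\Csp(\bB)$ you must jointly satisfy many $\tau_1$-constraints that share variables. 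That is itself an instance of $\Csp(\bB_1)$; genericity (Lemma~\ref{lem:genericPropertiesWithParameters}) only combines \emph{one} $\tau_1$-constraint system with \emph{one} $\tau_2$-constraint system and says nothing about this --- over $(\Q;\neq,<,R_{\mi})$, for example, order constraints inside different copies of $R$ can chain into unsatisfiable cycles. Second, the Boolean-level encoding cannot work as described: a conjunction of binary exclusive-or constraints is an affine system over $\mathrm{GF}(2)$ and is decidable in polynomial time, so ``one XOR gadget per NAE-clause'' is not a correct reduction. The hardness of ``exactly one of $x=y$, $u=v$'' over an infinite domain comes from the transitivity of equality, i.e., from the classification of equality constraint languages; to invoke it you would need the permutation-invariant relation $\{(x,y,u,v)\mid (x=y)\oplus(u=v)\}$ itself (or some NP-hard equality language) to be pp-definable in $\bB$, whereas $R\wedge\phi$ is only a proper sub-relation of it, carrying extra $\tau_1$- and $\tau_2$-structure that destroys the correspondence. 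A third, smaller issue: solutions of the non-convexity witness need not have the injectivity pattern required to match the two prescribed solutions of $\phi$, so even the two intended modes of a single gadget are not established.

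The paper avoids all three problems by arguing in the contrapositive, algebraic direction: it never builds gadgets from non-convexity. If $\bB$ has an essential polymorphism, then it has a binary essential one (the OEP via Lemmas~\ref{lem:orb-ext} and~\ref{lem:free-orb-ext}, then K\'ara's Lemma~\ref{lem:kara}), and Proposition~\ref{prop:key} --- this is where cross prevention, 2-set-transitivity and no algebraicity actually do their work --- converts it into binary injective polymorphisms of $\bB_1$ and $\bB_2$, i.e., convexity of both theories by Theorem~\ref{thm:hdr}. Hence non-convexity of either theory forces all polymorphisms of $\bB$ to be essentially unary, and NP-hardness follows from Proposition~\ref{prop:ecsp-hard}, which imports, via the pp-definability Galois connection, exactly the equality-language hardness that your sketch is missing. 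A direct gadget reduction is not impossible in principle, but carrying it out would essentially amount to reproving these imported results, including the global realizability that genericity alone does not provide.
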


\ignore{
\begin{theorem}
Let $\bB$ be the generic combination of two $\omega$-categorical structures $\bB_1$ and $\bB_2$ with \hyperlink{def:J}{Property~$J$} and such that $\Csp(\bB_1')$ and $\Csp(\bB_2')$ are in P for all finite signature reducts $\bB_1'$ of $\bB_1$ and $\bB_2'$ of $\bB_2$. Then one of the following applies:
\begin{itemize}
\item $\Th(\bB_1)$ or $\Th(\bB_2)$ is not convex; in this case, $\bB$ has a finite signature reduct $\bB'$ such that $\Csp(\bB')$ is NP-hard.
\item Each of $\Th(\bB_1)$ and $\Th(\bB_2)$ is convex, and for every finite signature reduct $\bB'$ of $\bB$  $\Csp(\bB')$ is in P.
\end{itemize}
\end{theorem}

  }

\noindent In other words, either the Nelson-Oppen conditions apply, and $\Csp(\bB)$ is in P, 
or otherwise $\Csp(\bB)$ is NP-complete.

Again, the easiest examples for structures satisfying the assumptions of Theorem~\ref{thm:r2} come from $(\mathbb Q,<)$ (see Section~\ref{sec:examples}). The CSPs for first-order reducts of $(\mathbb Q,<)$ (see Section~\ref{sect:gen-comb} for the definition of first-order reduct) have been called \emph{temporal CSPs} and their computational complexity has been classified~\cite{tcsps-journal}. 
There are many interesting polynomial-time tractable temporal CSPs that have non-convex theories, which makes
temporal CSPs a particularly interesting class for understanding the situation where the Nelson-Oppen conditions do not apply.
Generic combinations of temporal CSPs are isomorphic to first-order reducts of the \emph{countable random permutation} introduced in~\cite{CameronPermutations} and studied in~\cite{LinmanPinsker};
a complexity classification of the CSPs of all reducts of the random permutation (as e.g.\ in~\cite{tcsps-journal,Phylo-Complexity,posetCSP16} for simpler structures than the random permutation) is out of reach for the current methods. In principle, the method of canonical functions (see~\cite{BMPP16}) can be applied but leads to a huge amount of cases and seems combinatorially intractable at the moment.

\subsection{Outline of the article}
The rest of the article is organized as follows. In Section~\ref{sect:gen-comb} we introduce generic combinations, study existence and uniqueness and prove a result that will help us to prove satisfiability of conjunctions of formulas.
Afterwards, in Section~\ref{sect:bin-inj}, we prove the main result about the computational complexity of generic combinations of CSPs.
In Section~\ref{sec:examples} we present examples for structures with \hyperlink{def:J}{Property~$J$} and for Theorem~\ref{thm:r2}.
Finally, in Section~\ref{sect:non-class} we show that a general complexity classification for all generic combinations is not feasible.

\section{Generic Combinations}
\label{sect:gen-comb}
We already mentioned that our definition of CSPs for theories is a strict generalisation of
  the notion of CSPs for structures, 
  and this will be clarified by the following proposition which is an immediate
  consequence of Proposition~2.4.6 in~\cite{Bodirsky-HDR-v8}. 
   
  
  
\begin{proposition}\label{prop:csp}
Let $T$ be a first-order theory with finite relational signature. Then there exists a structure $\bB$ such that 
$\Csp(\bB) = \Csp(T)$ if and only if 
$T$ has the \emph{Joint Homomorphism Property (JHP)}, that is, for any two models $\bA$, $\bB$ of $T$ there exists a model $\bC$ of $T$ such that both $\bA$ and $\bB$ homomorphically map to $\bC$. 
\end{proposition}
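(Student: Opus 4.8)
The plan is to reformulate both $\Csp(T)$ and $\Csp(\bB)$ in terms of homomorphisms and then handle the two implications separately. To a finite set $S$ of atomic $\tau$-formulas I associate its \emph{canonical database} $\bD_S$: the finite $\tau$-structure whose elements are the variables of $S$ (identified according to the equalities forced by $S$) and whose tuples are exactly those asserted by $S$. Then $S$ is satisfiable in a structure $\bM$ if and only if there is a homomorphism $\bD_S \to \bM$, and $T \cup S$ is satisfiable if and only if $\bD_S$ maps homomorphically into some model of $T$. Writing $\mathcal{C}$ for the class of finite $\tau$-structures that map homomorphically into some model of $T$, the yes-instances of $\Csp(T)$ are exactly (canonical databases of) members of $\mathcal{C}$, so $\Csp(\bB) = \Csp(T)$ says precisely: a finite structure maps into $\bB$ if and only if it lies in $\mathcal{C}$.

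For the direction ``existence of $\bB$ implies JHP'', let $\bA$ and $\bA'$ be models of $T$ (I rename the two models to avoid the clash with the witness $\bB$). By compactness it suffices to show that $T \cup \mathrm{Diag}^+(\bA) \cup \mathrm{Diag}^+(\bA')$ is satisfiable, where $\mathrm{Diag}^+$ denotes the positive atomic diagram written with two disjoint sets of new constants; a model of this theory is exactly a model of $T$ admitting homomorphisms from both $\bA$ and $\bA'$. A finite subset of this theory mentions only finite substructures $\bF \subseteq \bA$ and $\bF' \subseteq \bA'$, so it is enough to map each $\bF \sqcup \bF'$ homomorphically into a model of $T$. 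Now $\bF, \bF' \in \mathcal{C}$ (they map into $\bA, \bA' \models T$), hence $\bF \to \bB$ and $\bF' \to \bB$; because $\bB$ is a \emph{single} structure these two homomorphisms combine into one homomorphism $\bF \sqcup \bF' \to \bB$ (here I use that the signature is relational, so the disjoint union carries no tuples across the two parts). Thus $\bF \sqcup \bF' \in \mathcal{C}$, which is what compactness needs. The use of a single target $\bB$ to merge two homomorphisms is the conceptual core of this direction.

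For the converse ``JHP implies existence of $\bB$'', I note that since $\tau$ is finite and relational, $\mathcal{C}$ has only countably many isomorphism types; I enumerate representatives $\bD_1, \bD_2, \ldots$ and fix models $\bM_i \models T$ with $\bD_i \to \bM_i$. Using JHP repeatedly I build a chain $\bM_1 = \bC_1 \to \bC_2 \to \bC_3 \to \cdots$ of models of $T$ with $\bM_i \to \bC_i$ for every $i$ (at step $i$ apply JHP to $\bC_{i-1}$ and $\bM_i$), and let $\bB$ be the direct limit of this chain. Every $\bD_i$ then maps into $\bB$, so $\mathcal{C} \subseteq \{\bD : \bD \to \bB\}$; conversely any finite substructure of $\bB$ already sits inside some stage $\bC_j \models T$ and hence lies in $\mathcal{C}$, giving the reverse inclusion. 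This yields $\Csp(\bB) = \Csp(T)$.

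The main obstacle lies in this last direction: $T$ is an arbitrary first-order theory and need not be preserved under direct limits, so the constructed $\bB$ will in general \emph{not} be a model of $T$. The point to get right is that we never need $\bB \models T$; we only need the homomorphism-age of $\bB$ to equal $\mathcal{C}$, and this holds because each finite configuration of the direct limit is already witnessed at a finite stage $\bC_j$, which does model $T$. Verifying this ``finite-stage'' argument carefully — that finitely many elements together with the finitely many tuples among them can be lifted to a common $\bC_j$ — is the one technical step that requires attention.
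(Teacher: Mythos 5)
Your proof is correct, but there is no in-paper argument to compare it against: the paper does not prove this proposition at all, it derives it as an immediate consequence of a cited result (Proposition~2.4.6 of Bodirsky's book), so your argument is genuinely self-contained where the paper's is not. Your forward direction is the standard one: compactness applied to $T \cup \mathrm{Diag}^+(\bA) \cup \mathrm{Diag}^+(\bA')$, with the disjoint union $\bD \sqcup \bD'$ of the two finite pieces mapped into the single target $\bB$ (correctly using that the signature is relational). Your backward direction, via a chain built by iterating the JHP and then taking its direct limit, is a legitimate but non-standard route, and you rightly isolate both the danger (the limit need not model $T$) and why it is harmless (only which finite structures map into $\bB$ matters, and every finite configuration factors through a stage $\bC_j \models T$). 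One step needs more care than you give it: a finite substructure $\bD$ of the limit does not literally ``sit inside'' a stage, since the colimit maps need not be injective; rather, you must choose representatives of its finitely many elements at a common stage, note that each of its finitely many tuples has a witness at some (possibly different) stage, and use directedness together with the fact that elements identified in the limit are already identified at some finite stage to push representatives and witnesses to a single $\bC_j$, at which point the assignment becomes a homomorphism $\bD \to \bC_j$. This reconciliation does go through, but it is exactly where colimits along non-injective homomorphisms differ from unions of chains of substructures. A more economical alternative avoids direct limits entirely: apply compactness to $T \cup \{\phi \mid \phi \text{ is an existential positive sentence and } T \cup \{\phi\} \text{ is satisfiable}\}$; finite subsets are satisfiable by iterating the JHP, because existential positive sentences are preserved by homomorphisms, and any model $\bB$ of this theory satisfies $\Csp(\bB) = \Csp(T)$ --- this buys you $\bB \models T$ for free, which your limit construction does not provide. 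Finally, note that both the statement and your proof implicitly assume $T$ is satisfiable: an unsatisfiable $T$ has the JHP vacuously, yet no $\bB$ can have $\Csp(\bB) = \Csp(T)$, since the empty instance is satisfiable in every structure but not together with $T$.
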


\begin{expl}\label{expl:jhp}
A simple example of two theories $T_1,T_2$ with the JHP such that $T_1 \cup T_2$ does not have the JHP is given by
\begin{align*}
T_1  & \ceq \{ \forall x,y \; ((O(x) \wedge O(y)) \Rightarrow x=y ) \} \\
T_2  & \ceq \{ \forall x.  \; \neg (P(x) \wedge Q(x))  \}
\end{align*}
Suppose for contradiction that $T_1 \cup T_2$ has the JHP. 
Note that 
\begin{align*}
& T_1 \cup T_2 \cup \{\exists x (O(x) \wedge P(x))\} \quad
\text{and } \quad  T_1 \cup T_2 \cup \{\exists y (O(y) \wedge Q(y))\}
\end{align*}
are satisfiable. The JHP implies that 
$$T_1 \cup T_2 \cup \{\exists x (O(x) \wedge P(x)), \exists y (O(y) \wedge Q(y))\}$$ has a model
$\bA$, so $\bA$ has elements
$u,v$ satisfying 
$O(u) \wedge O(v) \wedge P(u) \wedge Q(v)$. 
Since $\bA \models T_1$ 
we must have $u=v$, 
and so $\bA$ does not satisfy the sentence 
$\forall x. \; \neg (P(x) \wedge Q(x))$ from $T_2$,
a contradiction.  
\end{expl}

For general theories $T_1, T_2$ even the question whether $T_1 \cup T_2$ 
has the JHP might be 
a difficult question. 
But if both $T_1$ and $T_2$ are $\omega$-categorical with a countably infinite model that  \emph{does not have algebraicity},
then $T_1 \cup T_2$ always has the JHP 
(a consequence of Lemma~\ref{lem:combine-2} below). 
A structure $\bB$ (and its first-order theory) \emph{does not have algebraicity} if for all first-order 
formulas $\phi(x_0,x_1,\dots,x_n)$
and all elements $a_1,\dots,a_n \in B$ the set $\{a_0 \in B \mid  \bB \models \phi(a_0,a_1,\dots,a_n)\}$
 is either infinite or contained in $\{a_1,\dots,a_n\}$; otherwise, we say that
the structure \emph{has algebraicity}. 


Next, we will introduce  \emph{strong amalgamation} which has a tight link to algebraicity.
The \emph{age} of a relational $\tau$-structure $\bB$ is the class of all finite $\tau$-structures that embed into $\bB$.  
A class $\cK$ of structures has the \emph{amalgamation property} if for all $\bA, \bB_1, \bB_2 \in \cK$ and embeddings $f_i \colon \bA \rightarrow \bB_i$, for $i=1$ and $i=2$, there exist $\bC \in \cK$ and embeddings $g_i \colon \bB_i \rightarrow \bC$ such that $g_1 \circ f_1 = g_2 \circ f_2$. It has the \emph{strong amalgamation property} if additionally $g_1(B_1) \cap g_2(B_2) = g_1(f_1(A)) = g_2(f_2(A))$.
If $\cK$ is a class of structures with relational signature which is closed under isomorphism, substructures, and has the amalgamation property, then there exists an  (up to isomorphism unique) 
countable homogeneous structure $\bB$ whose age is $\cK$. This structure $\bB$ is called \emph{\Fresse -limit} of $\cK$ (see~\cite{HodgesLong}, page 326).
Moreover, in this case, we have the following:
\begin{propC}[{\cite[p.~330]{HodgesLong}}]\label{prop:noAlgStrongAmg}
  If $\cK$ is the age of a countable structure $\bB$,
  then $\bB$ has no algebraicity if and only if $\cK$ has the strong amalgamation property.  
\end{propC}

The significance of strong amalgamation in the theory of
combining decision procedures
has already been pointed out by Bruttomesso, Ghilardi, and Ranise~\cite{BruttomessoGR14}. 

A structure $\bB_1$ is called a \emph{reduct} of a structure $\bB_2$, and $\bB_2$ is called an \emph{expansion} of $\bB_1$, if $\bB_1$ is obtained from $\bB_2$ by dropping some of the relations of $\bB_1$. 
If $\bB_1$ is a reduct of $\bB_2$ with the signature $\tau$ then we write $\bB_2^{\tau}$ for $\bB_1$.
An expansion $\bB_2$ of $\bB_1$ is called a \emph{first-order expansion} if all additional relations in $\bB_2$ have a first-order definition in $\bB_1$. 
A structure $\bB_1$ is called a \emph{first-order reduct} if $\bB_2$ is a reduct of a first-order expansion 
of $\bB_2$.
Note that if a structure $\bB$ is $2$-set-transitive then so is every first-order reduct of $\bB$ (since its automorphism group contains the automorphisms of $\bB$).

Let $\tau_1$ and $\tau_2$ be disjoint relational
signatures, and let ${\cK}_i$ be a class of isomorphism-closed finite $\tau_i$-structures, for $i \in \{1,2\}$. 
Then ${\cK}_1 * {\cK}_2$ denotes the class of $(\tau_1 \cup \tau_2)$-structures given by
$\{ \bA \mid \bA^{\tau_1} \in \cK_1 \text{ and } \bA^{\tau_2} \in \cK_2)\}$.

If $\bar{a} = (a_1,\dots,a_n) \in B^n$ and $G$ is a permutation group on $B$
then
\[
G \bar{a} \ceq \{(\alpha(a_1),\dots,\alpha(a_n)) \mid \alpha \in G\}
\]
is called the \emph{orbit} of $\bar a$ (with respect to $G$); orbits of pairs (i.e., $n=2$) are also called \emph{orbitals}. Orbitals of pairs of equal elements are called \emph{trivial}. In this article, whenever we have a fixed structure $\bB$ and not otherwise noted, $G$ will always be the automorphisms group of $\bB$.

The following theorem is one of the most important tools when dealing with $\omega$-categorical structures.
\begin{theorem}[Engeler, Ryll-Nardzewski, Svenonius, see~{\cite[p. 341]{HodgesLong}}]\label{thm:ryll}
  Let $\bB$ be a countably infinite structure with countable signature. Then, the following are equivalent:
  \begin{enumerate}
  \item $\bB$ is $\omega$-categorical;
  \item for all $n\geq 1$ every orbit of $n$-tuples is first-order definable in $\bB$;
  \item for all $n\geq 1$ there are only finitely many orbits of $n$-tuples.
  \end{enumerate}
\end{theorem}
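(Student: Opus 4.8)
The plan is to route every implication through the Stone space $S_n(T)$ of complete $n$-types of $T \ceq \Th(\bB)$ over the empty set: a point of $S_n(T)$ is a maximal $T$-consistent set of formulas $p(x_1,\dots,x_n)$, the topology has the sets $[\phi] \ceq \{p \mid \phi \in p\}$ as basic clopen sets, and $S_n(T)$ is compact, Hausdorff and totally disconnected. For a tuple $\bar a \in B^n$ write $\mathrm{tp}(\bar a) \ceq \{\phi \mid \bB \models \phi(\bar a)\}$ for its type. The single observation underlying everything is that automorphisms preserve all first-order formulas; equivalently, the solution set $\{\bar x \mid \bB \models \phi(\bar x)\}$ of any formula $\phi$ is a union of $\Aut(\bB)$-orbits. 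Hence the partition of $B^n$ into orbits always refines the partition into type-classes $\{\mathrm{tp}(\bar a) \mid \bar a \in B^n\}$, and the whole argument is a cycle $(1)\Rightarrow(3)\Rightarrow(2)\Rightarrow(1)$ in which this refinement is repeatedly upgraded to an equality.

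For $(1)\Rightarrow(3)$ I would first show that $\omega$-categoricity forces every $S_n(T)$ to be finite. Arguing contrapositively, an infinite Stone space contains a non-isolated point, i.e.\ a non-principal type $p$; the Omitting Types Theorem then yields a countable model of $T$ omitting $p$, while L\"owenheim--Skolem yields a countable model realising $p$, and these two models are non-isomorphic, contradicting (1). Finiteness of $S_n(T)$ makes every type isolated (points of a finite Hausdorff space are clopen), and an isolated type is realised in \emph{every} model of $T$, in particular in $\bB$. A back-and-forth between $\bB$ and itself, extending a finite partial elementary map one element at a time and invoking realisation of the relevant isolated type at each step, then shows that equal types imply equal orbits. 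So orbits coincide with the finitely many type-classes, which is (3).

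For $(3)\Rightarrow(2)$ I fix $n$ and list the finitely many orbits $O_1,\dots,O_m$. Finitely many orbits means finitely many realised types, so each realised type is isolated; moreover every type of $T$ is realised in $\bB$, because a formula lying in some type is $T$-consistent and hence, by completeness of $T$, satisfiable in $\bB$. The same back-and-forth as before gives that equal types imply equal orbits, so tuples in distinct orbits have distinct types and are separated by some formula; as solution sets are unions of orbits, a formula true on $O_i$ and false on one representative of $O_j$ is false on all of $O_j$. The conjunction of such separators over all $j \neq i$ therefore defines exactly $O_i$, which is (2). The closing implication $(2)\Rightarrow(1)$ reduces, by the same membership argument ($\bar a$ lies in the orbit defined by $\phi$ iff $\phi \in \mathrm{tp}(\bar a)$), to showing that the isolated realised types are in fact finite in number; once each $S_n(T)$ is finite, all types are isolated hence realised in every countable model, and a back-and-forth between any two countable models of $T$ produces an isomorphism, so $\bB$ is $\omega$-categorical.

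The two genuinely load-bearing facts are that \emph{isolated types are realised in every model} — this is what makes the two back-and-forth constructions go through, since a finite partial elementary map can then always be extended by a prescribed element in domain and in range — and the Omitting Types Theorem, which is the only non-elementary input and is used exactly once. I expect the main obstacle to be the closing step $(2)\Rightarrow(1)$. Definability of the individual orbits only gives that each realised type is isolated, and the delicate point is to convert this into \emph{finiteness} of $S_n(T)$: the realised types are dense in $S_n(T)$ and each is isolated, yet one must still exclude a non-realised accumulation point, which is exactly where the argument has to use the hypothesis in full strength. Every other step is bookkeeping on top of the single observation that solution sets of formulas are unions of orbits.
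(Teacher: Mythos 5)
The paper does not prove this statement at all: it is quoted as a classical result with a pointer to Hodges, so there is no in-paper argument to compare against, and I assess your proof on its own merits. Your Stone-space route is the standard one, and two of your three arrows are essentially correct. For $(1)\Rightarrow(3)$, omitting a non-principal type (the one place where countability of the signature and the Omitting Types Theorem are genuinely needed) plus the back-and-forth showing that, once all types are isolated, equality of types implies equality of orbits, is exactly right. In $(3)\Rightarrow(2)$ your justification that ``every type of $T$ is realised in $\bB$'' is compressed: satisfiability in $\bB$ of each formula of a type only gives \emph{finite} realisability, and you must combine the density of realised types in $S_n(T)$ with the fact that a finite subset of a Hausdorff space is closed (equivalently, a pigeonhole argument over the finitely many realised types) to conclude that $S_n(T)$ is finite and hence every type is realised. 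Since you state the density fact yourself in the closing paragraph, this is a fixable presentational gap, not a mathematical one.

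The genuine gap is exactly the step you flagged: $(2)\Rightarrow(1)$ cannot be completed, because under the literal reading of item (2) the implication is \emph{false}. Take $\bB = (\N;<)$: its automorphism group is trivial, so every orbit of $n$-tuples is a singleton, and every singleton is first-order definable without parameters because every element of $(\N;<)$ is definable (``exactly $k$ elements lie below $x$''). Thus (2) holds, while (1) and (3) fail: there are infinitely many orbits, and $\N + \mathbb{Z}$ is a second countable model of $\Th(\N;<)$. This realises precisely the obstruction you diagnosed: in $\Th(\N;<)$ every realised $1$-type is isolated, yet $S_1(T)$ is infinite and has a non-realised accumulation point, so no argument can exclude it from hypothesis (2) alone — item (2) as printed is a loose rendering of Hodges' Theorem 7.3.1, whose corresponding clause must be strengthened, e.g.\ to ``every relation preserved by all automorphisms (every union of orbits) is first-order definable'', or to ``every orbit is definable and there are only finitely many''. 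With the first strengthening your cycle closes immediately: there are only countably many formulas up to equivalence, while infinitely many orbits would yield uncountably many distinct invariant relations, so the strengthened (2) implies (3); your back-and-forth between two arbitrary countable models (legitimate once all types are principal, hence realised in every model) then gives (3)$\Rightarrow$(1) and finishes the equivalence.
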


\noindent Theorem~\ref{thm:ryll} implies that a homogeneous structure with finite relational signature is $\omega$-categorical, and the expansion of an $\omega$-categorical structure by all first-order definable relations (i.e., all orbits of its age) is homogeneous.
We are now ready to prove the first result about generic combinations.

\begin{lemma}\label{lem:combine-2}
Let $T_1$ and $T_2$ be $\omega$-categorical theories with disjoint relational signatures $\tau_1$ and $\tau_2$, with infinite models 
without algebraicity. Then there exists an
$\omega$-categorical model $\bB$
 of $T_1 \cup T_2$ without algebraicity 
such that 
\begin{align}
\text{ for all } k \in {\mathbb N}, \bar a,\bar b \in B^{k} \text{ injective} \colon  & \quad 
\Aut(\bB^{\tau_1}) \bar a \cap \Aut(\bB^{\tau_2}) \bar b \neq \emptyset  \quad\text{and}
\label{eq:free}
\\
  \text{ for all } k \in {\mathbb N}, \bar a \in B^{k} \text{ injective}  \colon &  \quad \Aut(\bB^{\tau_1}) \bar a \cap \Aut(\bB^{\tau_2}) \bar a  = \Aut(\bB) \bar a \, . 
\label{eq:splits}
\end{align}
\end{lemma}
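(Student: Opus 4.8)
The plan is to realise $\bB$ as a reduct of the \Fresse\ limit of the combined amalgamation class $\cK_1 * \cK_2$. First I would pass to the homogeneous setting: let $\bB_1,\bB_2$ be the (unique, by $\omega$-categoricity) countable models of $T_1,T_2$, and let $\bA_1,\bA_2$ be their expansions by all first-order definable relations, in signatures $\sigma_1 \supseteq \tau_1$ and $\sigma_2 \supseteq \tau_2$. By Theorem~\ref{thm:ryll} each $\bA_i$ is homogeneous and $\omega$-categorical with $\Aut(\bA_i)=\Aut(\bB_i)$, and since $\bB_i$ has no algebraicity, Proposition~\ref{prop:noAlgStrongAmg} shows that $\cK_i \ceq \Age(\bA_i)$ is a strong amalgamation class. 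Here the orbits of $\Aut(\bB_i)$ on injective $k$-tuples are exactly the quantifier-free $\sigma_i$-types, since $\sigma_i$ names every first-order definable relation of $\bB_i$.

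The key step is to show that $\cK \ceq \cK_1 * \cK_2$ is again a strong amalgamation class. Closure under isomorphism and substructures is immediate from the corresponding closure of $\cK_1$ and $\cK_2$, working reduct-wise. For amalgamation, given $\bE,\bS_1,\bS_2 \in \cK$ with embeddings of $\bE$ into $\bS_1$ and into $\bS_2$, I would amalgamate the $\sigma_1$-reducts inside $\cK_1$ and the $\sigma_2$-reducts inside $\cK_2$. The crucial point — and the main obstacle — is that strong amalgamation yields amalgams whose underlying set is canonically the pushout $S_1 \sqcup_E S_2$ (only the shared copy of $\bE$ is identified and no new points are introduced), so the two reduct-amalgams can be taken on one and the same domain and then superposed; the resulting $(\sigma_1\cup\sigma_2)$-structure lies in $\cK$ and witnesses strong amalgamation. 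Let $\bD$ be the \Fresse\ limit of $\cK$. Since a structure on $n$ points in $\cK$ is determined by its two reducts, and each $\cK_i$ has only finitely many such structures, $\cK$ has finitely many $n$-element structures; hence $\bD$ is $\omega$-categorical by Theorem~\ref{thm:ryll} and has no algebraicity by Proposition~\ref{prop:noAlgStrongAmg}.

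Next I would identify the reducts of $\bD$. A back-and-forth argument shows that $\bD^{\sigma_1}$ is homogeneous with age $\cK_1$, hence isomorphic to $\bA_1$: to extend a finite $\sigma_1$-isomorphism by one point, I combine the required $\sigma_1$-type of the image point with an arbitrary one-point $\sigma_2$-extension taken in $\cK_2$, obtaining a one-point extension in $\cK$ that the extension property of $\bD$ realises. Consequently $\bB \ceq \bD^{\tau_1 \cup \tau_2}$ satisfies $\bB^{\tau_i} = (\bD^{\sigma_i})^{\tau_i} \cong \bA_i^{\tau_i} = \bB_i \models T_i$, so $\bB \models T_1 \cup T_2$. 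Moreover each $\sigma_i$-relation is first-order definable in $\bB^{\tau_i}$, so $\bD$ is a first-order expansion of $\bB$; thus $\Aut(\bB)=\Aut(\bD)$ and $\Aut(\bB^{\tau_i})=\Aut(\bD^{\sigma_i})$, and $\bB$ inherits $\omega$-categoricity and the absence of algebraicity from $\bD$.

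Finally, \eqref{eq:free} and \eqref{eq:splits} follow from homogeneity together with the definition of $\cK$. For \eqref{eq:free}, given injective $\bar a,\bar b \in B^k$, the $\sigma_1$-structure carried by $\bar a$ lies in $\cK_1$ and the $\sigma_2$-structure carried by $\bar b$ lies in $\cK_2$; superposing them on $k$ labelled points yields a structure in $\cK = \Age(\bD)$, and its embedding into $\bD$ produces a tuple $\bar c$ with the $\sigma_1$-type of $\bar a$ and the $\sigma_2$-type of $\bar b$, that is, $\bar c \in \Aut(\bB^{\tau_1})\bar a \cap \Aut(\bB^{\tau_2})\bar b$. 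For \eqref{eq:splits} the inclusion $\supseteq$ is clear; conversely, since $\tau_1$ and $\tau_2$ are disjoint and relational, a tuple $\bar c$ having the same $\sigma_1$-type and the same $\sigma_2$-type as the injective tuple $\bar a$ has the same quantifier-free $(\sigma_1\cup\sigma_2)$-type as $\bar a$, so homogeneity of $\bD$ provides $\alpha \in \Aut(\bD)=\Aut(\bB)$ with $\alpha(\bar a)=\bar c$. I expect the strong-amalgamation step for $\cK_1 * \cK_2$, and the bookkeeping that places both reduct-amalgams on the common domain $S_1\sqcup_E S_2$, to be the only genuinely delicate part; the remaining verifications are routine uses of homogeneity and Theorem~\ref{thm:ryll}.
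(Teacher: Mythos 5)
Your proof is correct and takes essentially the same route as the paper's: expand $\bB_1,\bB_2$ to homogeneous structures by naming all first-order definable relations (the paper names all orbits), take the \Fresse\ limit of the strong amalgamation class $\cK_1 * \cK_2$, pass to the $(\tau_1\cup\tau_2)$-reduct, and derive \eqref{eq:free} and \eqref{eq:splits} from homogeneity of the limit and of its $\sigma_i$-reducts. The only difference is that you spell out two steps the paper asserts without detail, namely the strong amalgamation property of $\cK_1 * \cK_2$ via superposition on the pushout, and the identification of the $\sigma_i$-reducts of the limit with the homogeneous expansions (needed to see that the reduct models $T_1 \cup T_2$), both of which you handle correctly.
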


\begin{proof}
Let $\bB_i$ be a countably infinite
model of $T_i$ without algebraicity for $i=1$ and $i=2$.  
Consider the $\sigma_i$-expansion $\bC_i$ of $\bB_i$ by all orbits of $\Aut(\bB_i)$,
and choose the signatures of $\bC_1$ and $\bC_2$ to be disjoint. Then, by definition, $\bC_i$ is homogeneous and $\omega$-categorical and Theorem~\ref{thm:ryll} implies that $\bC_i$ is first-order interdefinable with $\bB_i$ and thus without algebraicity. Hence, $\Age(\bC_i)$ is a strong amalgamation class and therefore, $\cK \ceq \Age(\bC_1) * \Age(\bC_2)$
 is a strong amalgamation class, too. 
 Let $\bC$ be the \Fresse -limit of $\cK$. By construction of $\bC$ two $n$-tuples $t$, $s$ are isomorphic over $\bC$ iff $t$ and $s$ satisfy the same $n$-ary relations from $\bC_1$ and $\bC_2$. Therefore, there are only finitely many non-isomorphic $n$-tuples over $\bC$, and hence, homogeneity of $\bC$, there are only finitely many orbits of $n$-tuples over $\bC$. This implies $\omega$-categoricity of $\bC$. By strong amalgamation of $\cK$ and Proposition~\ref{prop:noAlgStrongAmg}, $\bC$ has no algebraicity.

 



Hence, the
$(\tau_1 \cup \tau_2)$-reduct $\bB$ of $\bC$ 
is $\omega$-categorical and has no algebraicity.
To show $(\ref{eq:free})$, 
let $\bar a, \bar b \in B^{k}$ be injective tuples and
let $\bA$ be the
$(\sigma_1 \cup \sigma_2)$-structure with domain $A = \{1,\dots,k\}$ such that $i \mapsto a_i$
is an embedding of $\bA$ into $\bC^{\sigma_1}$
and $i \mapsto b_i$ is an 
embedding of
$\bA$ into $\bC^{\sigma_2}$. Then $\bA \in \Age(\bC)$, so there exists
an embedding $e \colon \bA \to \bC$. 
Let $\bar c \ceq (e(1),\dots,e(k))$. 
By the homogeneity of $\bC^{\sigma_i}$
there exists 
$\alpha_i \in \Aut(\bC^{\sigma_i})$ 
such that $\alpha_1(\bar c) = \bar a$
and $\alpha_2(\bar c) = \bar b$,
showing that $\bar c \in \Aut(\bB^{\tau_1}) \bar a \cap \Aut(\bB^{\tau_2}) \bar b$.   
Finally, $(\ref{eq:splits})$ follows directly from the homogeneity of $\bC$. 
\end{proof}

Note that properties $(\ref{eq:free})$ and $(\ref{eq:splits})$ for $\bB$, $\bB^{\tau_1}$, $\bB^{\tau_2}$
are equivalent to items \eqref{item:satisfiableTogether} and \eqref{item:canSeperateFormula} in Section~\ref{sect:qcsps} for $T=\Th(\bB)$, $T_1=\Th(\bB^{\tau_1})$, $T_2 = \Th(\bB^{\tau_2})$, respectively, because $\bB$ is $\omega$-categorical.
Lemma~\ref{lem:combine-2} motivates the following definition. 

\begin{definition}[Generic Combination]
Let $\bB_1$ and $\bB_2$
 be countably infinite $\omega$-categorical structures with disjoint relational signatures $\tau_1$ and  $\tau_2$, and let $\bB$ be a model of 
$\Th(\bB_1) \cup \Th(\bB_2)$.
If $\bB$ satisfies item $(\ref{eq:free})$
then we say that $\bB$ is a \emph{free combination} of $\bB_1$ and $\bB_2$.
If $\bB$ satisfies both item $(\ref{eq:free})$ and item $(\ref{eq:splits})$ then 
we say that $\bB$ is a \emph{generic combination} (or \emph{random combination}; see~\cite{AckermanFreerPatel}) 
of $\bB_1$ and $\bB_2$. 
\end{definition} 

 In later proofs we will need means to prove satisfiability of conjunctions of formulas in the presence of parameters. This is facilitated by the following lemma. For tuples $\bar{c}$ we will use $\Aut_{\bar{c}}(\bB)$ for $\set{\alpha\in \Aut(\bB) \mid \alpha(\bar{c}) = \bar{c}}$.

\begin{lemma}\label{lem:genericPropertiesWithParameters}
  Let $\bB$ be the generic combination of the structures $\bB_1$ and $\bB_2$ with relational signatures $\tau_1, \tau_2$ respectively. Let $B$ be the domain of $\bB$ and $\bar{a}, \bar{b}\in B^{n}$ injective tuples and $\bar{c} \in B^m$ such that all entries of $\bar{c}$ are distinct from all entries of $\bar{a}$ and $\bar{b}$. Then
  \begin{align}
    &\Aut_{\bar{c}}(\bB^{\tau_1})\bar{a} \cap \Aut_{\bar{c}}(\bB^{\tau_2}) \bar{b} \neq \emptyset \qquad \text{and} \label{eq:freeWithParams}\\
    &\Aut_{\bar{c}}(\bB^{\tau_1})\bar{a} \cap \Aut_{\bar{c}}(\bB^{\tau_2}) \bar{a} = \Aut_{\bar{c}}(\bB)\bar{a}.\label{eq:splitsWithParams}
  \end{align}
\end{lemma}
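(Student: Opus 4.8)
The plan is to reduce both parametrized statements to the unparametrized genericity properties \eqref{eq:free} and \eqref{eq:splits} of the generic combination $\bB$, applied not to $\bar a,\bar b$ themselves but to the concatenated tuples in which the parameters $\bar c$ are prepended. First I would record a harmless normalization: since $\Aut_{\bar c}(\bB)$ depends only on the set of entries of $\bar c$, I may assume $\bar c$ is injective (replace it by an enumeration of its distinct entries), which changes none of the groups or orbits appearing in the statement. Then $(\bar c,\bar a)$, $(\bar c,\bar b)$, and $(\bar c,\bar a)$ are all injective tuples in $B^{m+n}$, so \eqref{eq:free} and \eqref{eq:splits} are applicable to them.

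For \eqref{eq:splitsWithParams}, the inclusion $\supseteq$ is immediate, since $\Aut(\bB)\subseteq\Aut(\bB^{\tau_i})$ and an automorphism fixing $\bar c$ lies in $\Aut_{\bar c}$ of each reduct. For $\subseteq$, given $\bar a^*$ in both $\Aut_{\bar c}(\bB^{\tau_1})\bar a$ and $\Aut_{\bar c}(\bB^{\tau_2})\bar a$, witnessed by some $\alpha_1,\alpha_2$ fixing $\bar c$, I would observe that the same $\alpha_i$ send $(\bar c,\bar a)$ to $(\bar c,\bar a^*)$, so $(\bar c,\bar a^*)$ lies in $\Aut(\bB^{\tau_1})(\bar c,\bar a)\cap\Aut(\bB^{\tau_2})(\bar c,\bar a)$. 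Applying \eqref{eq:splits} to the injective tuple $(\bar c,\bar a)$ yields $\gamma\in\Aut(\bB)$ with $\gamma(\bar c,\bar a)=(\bar c,\bar a^*)$; this $\gamma$ fixes $\bar c$ and sends $\bar a$ to $\bar a^*$, so $\bar a^*\in\Aut_{\bar c}(\bB)\bar a$.

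For \eqref{eq:freeWithParams}, applying \eqref{eq:free} to $(\bar c,\bar a)$ and $(\bar c,\bar b)$ yields $\beta_1\in\Aut(\bB^{\tau_1})$, $\beta_2\in\Aut(\bB^{\tau_2})$ and a common image $(\bar c^*,\bar d)$ with $\beta_1(\bar c)=\beta_2(\bar c)=\bar c^*$ and $\beta_1(\bar a)=\beta_2(\bar b)=\bar d$. The obstacle is that $\beta_1,\beta_2$ need not fix $\bar c$: the common image carries $\bar c$ to some $\bar c^*\neq\bar c$. To repair this, I would note that $\bar c$ and $\bar c^*$ lie in the same $\Aut(\bB^{\tau_1})$-orbit (via $\beta_1$) and the same $\Aut(\bB^{\tau_2})$-orbit (via $\beta_2$); as $\bar c^*$ is injective, \eqref{eq:splits} then produces $\delta\in\Aut(\bB)$ with $\delta(\bar c^*)=\bar c$. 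Since $\delta\in\Aut(\bB)\subseteq\Aut(\bB^{\tau_i})$, the compositions $\delta\beta_1\in\Aut_{\bar c}(\bB^{\tau_1})$ and $\delta\beta_2\in\Aut_{\bar c}(\bB^{\tau_2})$ now fix $\bar c$ and send $\bar a$ and $\bar b$ respectively to the common tuple $\delta(\bar d)$, witnessing that the intersection is nonempty.

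I expect the only genuinely delicate point to be exactly this correction step in \eqref{eq:freeWithParams}: the naive application of \eqref{eq:free} to the extended tuples loses control of the parameters, and it is the splitting property \eqref{eq:splits}, rather than \eqref{eq:free} itself, that lets one re-anchor the common image over the prescribed $\bar c$. Everything else is bookkeeping with the inclusions $\Aut(\bB)\subseteq\Aut(\bB^{\tau_i})$ and the injectivity of the concatenated tuples.
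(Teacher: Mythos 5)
Your proof is correct and follows essentially the same route as the paper: both apply the unparametrized properties \eqref{eq:free} and \eqref{eq:splits} to the concatenated tuples $(\bar c,\bar a)$, $(\bar c,\bar b)$, and then use \eqref{eq:splits} on the image of $\bar c$ to produce a correcting automorphism in $\Aut(\bB)$ that re-anchors the common image over $\bar c$ before composing. Your opening normalization (assuming $\bar c$ injective) is exactly the paper's remark about removing and reintroducing duplicate entries of $\bar c$, so the two arguments coincide.
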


\begin{proof}
  There exists $\bar{d} \in \Aut(\bB^{\tau_1})(\bar{c},\bar{a}) \cap \Aut(\bB^{\tau_2})(\bar{c}, \bar{b})$ by Property~(\ref{eq:free}) of generic combination. Notice that we did not require $\bar{c}$ to be injective. We can nevertheless apply Property~(\ref{eq:free}) by removing duplicate entries in $\bar{c}$, applying the property to the injective version and reintroducing the duplicates afterwards. In particular, $(d_1, \dotsc, d_m) \in \Aut(\bB^{\tau_1})\bar{c} \cap \Aut(\bB^{\tau_2})\bar{c} = \Aut(\bB)\bar{c}$ by Property~(\ref{eq:splits}) of generic combination. Hence, there exists $\gamma\in \Aut(\bB)$ such that $\gamma((d_1, \dotsc, d_m)) = \bar{c}$. Furthermore, there exist $\alpha\in \Aut(\bB^{\tau_1}), \beta\in \Aut(\bB^{\tau_2})$ such that  $\alpha((\bar{c},\bar{a})) = \bar{d} = \beta((\bar{c},\bar{b}))$. Hence, $\gamma\circ \alpha \in \Aut_{\bar{c}}(\bB^{\tau_1})$ and $\gamma\circ \beta \in \Aut_{\bar{c}}(\bB^{\tau_2})$ and $(\gamma \circ \alpha)\bar{a} = (\gamma\circ \beta)\bar{b} \in \Aut_{\bar{c}}(\bB^{\tau_1})\bar{a} \cap \Aut_{\bar{c}}(\bB^{\tau_2}) \bar{b}$.
  
    For equation~\eqref{eq:splitsWithParams}, $\supseteq$ is trivial and $\subseteq$ is implied by Property~(\ref{eq:splits}) of generic combination as follows.
    Let $\bar{d}\in \Aut_{\bar{c}}(\bB^{\tau_1})\bar{a} \cap  \Aut_{\bar{c}}(\bB^{\tau_2})\bar{a}$. Then
    $(\bar{c},\bar{d}) \in \Aut(\bB^{\tau_1})(\bar{c},\bar{a}) \cap  \Aut(\bB^{\tau_2})(\bar{c},\bar{a})$ and therefore $(\bar{c},\bar{d})\in \Aut(\bB)(\bar{c},\bar{a})$. Hence, there exists $\gamma \in \Aut(\bB)$ such that $\gamma(\bar{c},\bar{a}) = (\bar{c},\bar{d})$, i.e., $\bar{d}\in \Aut_{\bar{c}}(\bB)\bar{a}$.
\end{proof}

Lemma~\ref{lem:genericPropertiesWithParameters}  can be used to show the uniqueness of generic combinations.

\begin{lemma}\label{lem:unique}
Let $\bB_1$ and $\bB_2$ be countable $\omega$-categorical relational structures. 
Then up to isomorphism, there is at most one generic combination of $\bB_1$ and $\bB_2$. 
\end{lemma}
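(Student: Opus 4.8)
The plan is to prove uniqueness by a back-and-forth construction between two generic combinations $\bB$ and $\bB'$ of $\bB_1$ and $\bB_2$, using Lemma~\ref{lem:genericPropertiesWithParameters} to perform the extension steps. As everywhere in the paper I take generic combinations to be countable, so that $\bB$ and $\bB'$ admit enumerations of their domains. The key preliminary observation is that for $i\in\{1,2\}$ the reducts $\bB^{\tau_i}$ and $\bB'^{\tau_i}$ are countable models of the complete $\omega$-categorical theory $\Th(\bB_i)$, hence both isomorphic to $\bB_i$; in particular they realise exactly the complete $\tau_i$-types consistent with $\Th(\bB_i)$, and any $\tau_i$-type realised over a tuple $\bar a$ is also realised over any tuple realising the type of $\bar a$. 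Call a finite injective tuple $\bar a$ from $\bB$ and an equally long injective tuple $\bar a'$ from $\bB'$ \emph{matched} if $\bar a$ and $\bar a'$ satisfy the same $\tau_1$-formulas and the same $\tau_2$-formulas in their respective reducts. A bijection $B\to B'$ under which every finite tuple is matched to its image is an isomorphism: it preserves all atomic $\tau_1$- and $\tau_2$-formulas and their negations, and since every atomic $(\tau_1\cup\tau_2)$-formula is $\tau_1$-atomic, $\tau_2$-atomic, or an equality, and a bijection respects equalities, it preserves every atomic $(\tau_1\cup\tau_2)$-formula.

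The heart of the argument is the extension (forth) step: given a matched pair $\bar a\mapsto\bar a'$ and a new element $a_{n+1}\in B$, I must produce $a'_{n+1}\in B'$ with $\bar a a_{n+1}$ matched to $\bar a' a'_{n+1}$. If $a_{n+1}$ already occurs in $\bar a$ we reuse the corresponding entry of $\bar a'$, so assume $\bar a a_{n+1}$ is injective. Let $p_1$ and $p_2$ be the complete $\tau_1$- and $\tau_2$-types of $\bar a a_{n+1}$ in $\bB$. Since $\bar a'$ realises the same $\tau_i$-types as $\bar a$, the preliminary observation yields $u,v\in B'$, both distinct from the entries of $\bar a'$, such that $\bar a' u$ realises $p_1$ in $\bB'^{\tau_1}$ and $\bar a' v$ realises $p_2$ in $\bB'^{\tau_2}$. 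Now I apply Lemma~\ref{lem:genericPropertiesWithParameters} to $\bB'$ with parameter tuple $\bar c=\bar a'$ and the injective $1$-tuples $u$ and $v$: equation~\eqref{eq:freeWithParams} produces $w\in\Aut_{\bar a'}(\bB'^{\tau_1})u\cap\Aut_{\bar a'}(\bB'^{\tau_2})v$. Writing $w=\alpha(u)$ with $\alpha\in\Aut(\bB'^{\tau_1})$ fixing $\bar a'$, we get $\bar a' w=\alpha(\bar a' u)$, so $\bar a' w$ realises $p_1$; symmetrically $\bar a' w$ realises $p_2$, and $w$ lies outside $\bar a'$ because $\alpha$ is injective and fixes $\bar a'$ pointwise. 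Hence $a'_{n+1}\ceq w$ matches $a_{n+1}$. The back step is identical with the roles of $\bB$ and $\bB'$ exchanged; interleaving forth and back steps along fixed enumerations of $B$ and $B'$ exhausts both domains, and the union of the resulting chain of matched pairs is the desired isomorphism.

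The main obstacle is concentrated entirely in this extension step, namely realising a prescribed $\tau_1$-type \emph{and} a prescribed $\tau_2$-type over the \emph{same} already-fixed parameters $\bar a'$ by a single new point. The unparametrised property~\eqref{eq:free} of a generic combination is not enough, since amalgamating the two one-point extensions via~\eqref{eq:free} would in general move the earlier tuple $\bar a'$ and thereby destroy the matching built so far; it is precisely the parametrised statement in Lemma~\ref{lem:genericPropertiesWithParameters} that keeps $\bar a'$ fixed while fusing the extensions into one. Everything else is routine: the reduction of the non-injective case, the verification that a cofinal system of matched tuples is an isomorphism, and the standard interleaving of forth and back steps.
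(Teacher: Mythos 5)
Your proof is correct and follows essentially the same route as the paper's: a back-and-forth argument whose extension step first uses $\omega$-categoricity to find separate $\tau_1$- and $\tau_2$-realisations over the image tuple and then fuses them into a single new element via equation~\eqref{eq:freeWithParams} of Lemma~\ref{lem:genericPropertiesWithParameters}. The only cosmetic difference is that the paper invokes Condition~\eqref{eq:splits} explicitly at the end of the extension step, whereas you use it only implicitly (inside Lemma~\ref{lem:genericPropertiesWithParameters}) and instead verify directly that a bijection matching all $\tau_1$- and $\tau_2$-formulas is already an isomorphism.
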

\begin{proof}
  Let $\bB$ and $\bB'$ be two generic combinations of $\bB_1$ and $\bB_2$. We prove that $\bB$ and $\bB'$ are isomorphic by a back-and-forth argument. For this, we inductively extend a bijection $\alpha$ between finite subsets of $\bB$ and $\bB'$ such that $\alpha$ preserves all first-order $\tau_1$-formulas and $\tau_2$-formulas.
The empty map $\alpha$ trivially preserves all first-order formulas.
  Both structures satisfy the same first-order $\tau_1$-formulas  without free variables and $\tau_2$-formulas without free variables, since they are models of 
$\Th(\bB_1) \cup \Th(\bB_2)$.  
Now suppose that we have already constructed $\alpha$ for the finite substructure of $\bB$ induced by the elements of the tuple $\bar b \in B^n$ and want to extend $\alpha$ to another element 
$b_{n+1}$ of $\bB$. 
By the $\omega$-categoricity
of $T_i$, for $i=1$ and $i=2$, there exists an element $c_i$ of $\bB'$ such that 
$(\bar b,b_{n+1})$ satisfies the same formulas in $\bB^{\tau_i}$ as
$(\alpha(\bar b),c_i)$ satisfies in $(\bB')^{\tau_i}$. 
By Property~\eqref{eq:freeWithParams} of Lemma~\ref{lem:genericPropertiesWithParameters} there exists $c$ such that $(\alpha(\bar{b}),c)$ is in the same $(\bB')^{\tau_i}$ orbit as $(\alpha(\bar{b}),c_i)$ for $i=1$ and $i=2$. We define $\alpha(b_{n+1}) \ceq c$. Then, $(\alpha(\bar{b}),b_{n+1})$ satisfies the same formulas in $\bB'$ as $(\bar{b}, b_{n+1})$ satisfies in $\bB$ by Condition~\eqref{eq:splits} for generic combinations.  

Extending 
$\alpha^{-1}$ to another element of $\bB'$ is symmetric. This concludes the back-and-forth construction of an isomorphism between $\bB$ and $\bB'$. 
\end{proof}

Due to Lemma~\ref{lem:unique} we can now define $\bB_1 \ast \bB_2$ as \emph{the} generic combination of $\bB_1$ and $\bB_2$ for structures $\bB_1, \bB_2$ where a generic combination exists.

\ignore{
\begin{remark}
  This lemma implies that if a $\tau_1$-formula is satisfied by $t_1$ and a $\tau_2$-formula by $t_2$ we can find a solution for the conjunction of the formulas as long as the identifications between coordinates (and parameters) in $t_1, t_2$ match. In that case we can enforce injectivity on a syntactical level before applying the lemma via suitable substitutions and revert the substitutions afterwards. As we will guarantee injectivity before applying the lemma in this article, we will not elaborate this further.
\end{remark}
}

We now prove Proposition~\ref{prop:r1} that we already stated in the introduction, and which states that two countably infinite $\omega$-categorical structures with disjoint relational signatures have a generic combination if and only if both have no algebraicity, or at least one of the structures has an automorphism group which is $n$-transitive for all $n \in {\mathbb N}$. 

\begin{proof}[Proof of Proposition~\ref{prop:r1}]
If both $\bB_1$ and $\bB_2$ do not have algebraicity then 
the existence of an $\omega$-categorical generic combination follows from Lemma~\ref{lem:combine-2}.
If on the other hand one of them, say $\bB_1$, is $n$-transitive for all $n$ then $\bB_1$ is isomorphic to a first-order reduct of $(\N;=)$. The generic combination $\bB_1 \ast \bB_2$ can then easily be seen to be a first-order expansion of $\bB_2$.

We prove the converse direction by contradiction. Let $\bB$ be the generic combination of the $\tau_1$-structure $\bB_1$ and the $\tau_2$-structure $\bB_2$. 
Recall that $\bB^{\tau_i}$ is isomorphic to $\bB_i$, for $i \in \{1,2\}$.
By symmetry between $\bB_1$ and $\bB_2$, we will assume towards a contradiction that $\bB^{\tau_1}$ has algebraicity and $\Aut(\bB^{\tau_2})$ is not $n$-transitive for some $n\in \mathbb{N}$.
Choose $n$ to be smallest possible, so that $\Aut(\bB^{\tau_2})$ is not $n$-transitive. Therefore there exist tuples $(b_0, \dotsc, b_{n-1})$ and  $(c_0,\dotsc, c_{n-1})$ in $B^n$, each with pairwise distinct entries, that are in different orbits with respect to $\Aut(\bB^{\tau_2})$. By the minimality of $n$, there exists $\alpha\in \Aut(\bB^{\tau_2})$ such that  $\alpha(b_1,\dots,b_{n-1}) = (c_1,\dots,c_{n-1})$.
The algebraicity of $\bB^{\tau_1}$ implies that there exists a first-order $\tau_1$-formula $\phi(x_0,x_1,\dots,x_m)$ 
and pairwise distinct elements $a_1,\dots,a_m$ of $B$ such that $\phi(x,a_1,\dots,a_m)$ holds for precisely one element $a_0$ other than $a_1,\dots,a_m$. 
By adding unused extra variables to $\phi$ we can assume that $m \geq n-1$.
Choose elements $b_n,\dotsc, b_m$ of $B$ such that the entries of $(b_0, \dotsc, b_{n-1}, b_n,\dotsc, b_m)$ are pairwise distinct and define $c_i \ceq \alpha(b_i)$ for $i\in \set{n,\dotsc, m}$.
Since $\bB$ is a free combination, there exist tuples $(b_0', \dotsc, b_m'),\, (c_0', \dotsc, c_m')$ and $\beta_1, \gamma_1 \in \Aut(\bB^{\tau_1})$ and $\beta_2, \gamma_2 \in \Aut(\bB^{\tau_2})$ such that
\begin{align*}
  \beta_2(b_0, \dotsc, b_m) = (b_0', \dotsc, b_m'), & \qquad \beta_1 (b_0', \dotsc, b_m') = (a_0, \dotsc, a_m), \\
  \gamma_2(c_0, \dotsc, c_m) = (c_0', \dotsc, c_m'), & \qquad  \gamma_1 (c_0', \dotsc, c_m') = (a_0, \dotsc, a_m).
\end{align*}
Because $\gamma_1^{-1}\circ \beta_1 \in \Aut(\bB^{\tau_1})$ and $\gamma_2 \circ \alpha \circ \beta_2^{-1} \in \Aut(\bB^{\tau_2})$ both map $(b_1', \dotsc, b_m')$ to $(c_1', \dotsc, c_m')$, and due to Condition~\eqref{eq:splits} for generic combinations, there exists $\mu \in \Aut(\bB)$ such that $\mu (b_1', \dotsc, b_m') = (c_1', \dotsc, c_m')$. Since any operation in $\Aut(\bB^{\tau_1})$ preserves $\phi$, we have $\gamma_1 \circ \mu \circ \beta_1^{-1} (a_0,\dots,a_m) = (a_0,\dots,a_m)$. Therefore $\mu$ must map $b_0'$ to  $c_0'$. Hence, $\gamma_2^{-1} \circ \mu \circ \beta_2 \in \Aut(\bB^{\tau_2})$ maps $(b_0, \dotsc, b_{n-1})$ to $(c_0, \dotsc, c_{n-1})$, contradicting our assumption that they lie in different orbits with respect to $\Aut(\bB^{\tau_2})$.
\end{proof}

\ignore{
\begin{theorem}\label{thm:combine}
Let $\bB_1$ and $\bB_2$ be two $\omega$-categorical structures with disjoint relational signatures. 
Then the following are equivalent. 
\begin{enumerate}
\item $\bB_1$ and $\bB_2$ have a generic combination;
\item Both $\bB_1$ and $\bB_2$ do not have algebraicity,
or there is an $i \in \{1,2\}$
such that $\bB_i$ 
is $n$-transitive for all $n$. 
%
\item $\bB_1$ and $\bB_2$ have a generic $\omega$-categorical combination.
\end{enumerate}
\end{theorem}
\begin{proof}
If both $\bB_1$ and $\bB_2$
do not have algebraicity, then 
the existence of an $\omega$-categorical generic combination follows from Lemma~\ref{lem:combine-2}. 
If on the other hand $\bB_1$ is $n$-transitive for all $n$ then 
an $\omega$-categorical generic 
combination trivially exists (it will be a first-order expansion of $\bB_2$).
The case that $\bB_2$ is $n$-transitive for all $n$ is analogous. 

For the converse direction, let $\bB$ be
the generic combination of the $\tau_1$-structure 
$\bB_1$ and the $\tau_2$-structure $\bB_2$. 
Suppose for contradiction that $\bB_1$ has algebraicity 
but $\Aut(\bB_2)$ is not $n$-transitive for some $n \in {\mathbb N}$;
choose $n$ to be smallest possible,
so that $\Aut(\bB_2)$ is $n-1$-transitive, but not $n$-transitive.
Algebraicity implies that there exists a first-order 
$\tau_1$-formula 
$\phi(x_0,x_1,\dots,x_m)$ 
and pairwise distinct elements $a_1,\dots,a_m$ of $\bB$ such that 
$\phi(x,a_1,\dots,a_m)$ holds for precisely one element $x = a_0$ other than $a_1,\dots,a_m$ in $\bB$. 
By adding unused extra variables to $\phi$ we can assume that $m \geq n-1$. 

The assumption that $\Aut(\bB_2)$
is not $n$-transitive implies that 
 there exists a first-order formula $\psi(x_1,\dots,x_n)$
that holds on a tuple $\bar b = (b_0,\dots,b_{n-1})$ of pairwise distinct elements, but not on a tuple $\bar c = (c_0,\dots,c_{n-1})$ of pairwise distinct elements. 
By the minimality of $n$,
there exists an $\alpha \in \Aut(\bB_2)$ such that $\alpha(b_1,\dots,b_{n-1}) = (c_1,\dots,c_{n-1})$. 
Let $\delta$ be any permutation of the domain of $\bB$ that maps 
$(a_0,a_1,\dots,a_{n-1})$ to 
$(b_0,b_1,\dots,b_{n-1})$,
and let $\eta$ be any permutation of the domain of $\bB$ that maps
$(a_0,a_1,\dots,a_{n-1})$ to 
$(\alpha \circ \delta)(a_0,a_1,\dots,a_{m}) = (c_1,\dots,c_{n-1},
\alpha(\delta(a_{n})),\dots,\alpha(\delta(a_{m}))$.  
Since $\bB$ is a free combination
there exists a tuple $\bar b' = (b_0',b_1
,\dots,b_m')$ 
and automorphisms $\beta_i \in \Aut(\bB_i)$ such that $\beta_1(\bar b') = \bar a$ and
$\beta_2(\bar b') = (\bar b,\delta(a_n),\dots,\delta(a_m))$. 
Likewise, there exists a tuple $\bar c' = (c_0',c_1',\dots,c_m')$ 
and $\gamma_i \in \Aut(\bB_i)$ such that $\gamma_1(\bar c') = \bar a$ and
$\gamma_2(\bar c') = (\bar c,\alpha(\gamma(a_n)),\dots,\alpha(\gamma(a_m)))$. 
Then $(b'_1,\dots,b'_m)$ and $(c_1',\dots,c_m')$ lie in the same orbit of
$\Aut(\bB)$ since 
$\gamma_1^{-1} \circ \beta_1 \in \Aut(\bB_1)$ maps $(b'_1,\dots,b'_m)$ and $(c_1',\dots,c_m')$ and $\gamma^{-1}_2 \circ \alpha \circ \beta_2 \in \Aut(\bB_2)$ maps $(b'_1,\dots,b'_m)$ to $(c_1',\dots,c_m')$.
Since $\bB$ splits $\bB_1$ and $\bB_2$, there exists an $\mu \in \Aut(\bB)$ that maps $(b_1',\dots,b_n')$ to $(c_1',\dots,c_n')$. 
But since $\mu$ preserves $\phi$,
it must map $b_0'$ to $c_0'$, contradicting the assumptions that
$(b_0,b_1,\dots,b_n)$ and $(b_0',b_1',\dots,b_n')$ lie in a different orbit
than $(c_0,c_1,\dots,c_n)$ and 
$(c_0',c_1',\dots,c_n')$ with respect to
$\Aut(\bB_2)$. 
\end{proof}
}



\section{On the Necessity of the Nelson-Oppen Conditions}
\label{sect:bin-inj}
In this section we prove that for theories with \hyperlink{def:J}{Property~$J$} the conditions of
Nelson and Oppen are not only sufficient, but also necessary for the polynomial-time tractability of generic combinations (unless P = NP). In particular, we prove Theorem~\ref{thm:r2} from the introduction. 
In order to avoid an excess of superscripts we will drop the bar above tuples in this section. We need the following characterisation of
convexity of $\omega$-categorical theories.
\begin{thmC}[{\cite[Lemma 6.1.3]{Bodirsky-HDR-v8}}]\label{thm:hdr}

Let $\bB$ be a countably infinite $\omega$-categorical relational structure and let $T$ be its first-order theory. 
Then the following are equivalent. 
\begin{itemize}
\item $T$ is convex; 
\item $\bB$ has a binary injective polymorphism.
\end{itemize}
Moreover, if $\bB$ contains the relation $\neq$,
these conditions are also equivalent to the following.
\begin{itemize}
\item If $S$ is a finite set of atomic $\tau$-formulas
 such that 
$S \cup T \cup \{x_1 \neq y_1\}$ is satisfiable and $S \cup T \cup \{x_2 \neq y_2\}$
is satisfiable, then 
 $T \cup S \cup \{x_1 \neq y_1,x_2 \neq y_2\}$ is satisfiable, too. 
\end{itemize}
\end{thmC}
\noindent The construction of the binary injective polymorphism, which is at the core of this theorem, first finds an injective homomorphism from a finite substructure of $\bB^2$ to $\bB$ and then uses compactness of first-order logic to expand its domain to $\bB^2$. 

 An operation $f \colon B^k \to B$ is called \emph{essentially unary} if there exists an $i \leq k$ and a function $g \colon B \to B$ such that $f(x_1,\dots,x_k) = g(x_i)$ for all $x_1,\dots,x_k \in B$. The operation $f$ is called \emph{essential} if it is not essentially unary.
Hence, if a function $f\colon B^2 \rightarrow B$ is essential then there exist $\mybar{p},\mybar{q},\mybar{r},\mybar{s}\in B^2$ such that $p_2 = q_2$ and $r_1 = s_1$ and
$f(p_1, p_2) \neq  f(q_1, q_2)$ and $f(r_1, r_2) \neq  f(s_1, s_2)$.
The tuples $(\mybar{p},\mybar{q})$ and $(\mybar{r},\mybar{s})$ are called \emph{witnesses} for the essentiality of $f$.


The following fact is well-known. We give a short proof to illustrate how it follows from known results.

\begin{proposition}\label{prop:ecsp-hard}
Let $\bB$ be an infinite $\omega$-categorical structure with finite relational signature containing the relation $\neq$ and such that all polymorphisms of $\bB$ are essentially unary. Then $\Csp(\bB)$ is NP-hard. 
\end{proposition}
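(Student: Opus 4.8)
The plan is to exploit the standard Galois connection behind the algebraic approach: for $\omega$-categorical $\bB$, a relation is primitive positive definable in $\bB$ if and only if it is preserved by all polymorphisms of $\bB$, and pp-definability yields polynomial-time reductions between the associated CSPs (this is the machinery underlying~\cite{BodirskyNesetrilJLC}, already invoked above). So it suffices to pp-define in $\bB$ a family of relations whose CSP is NP-hard, and for this I would reduce from $3$-SAT. The only place where the hypothesis ``all polymorphisms are essentially unary'' is used is in verifying the pp-definability of a suitable disjunctive relation; the role of $\neq$ is to keep a ``true'' and a ``false'' value apart.

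Concretely, consider the $6$-ary relation
\[ D \ceq \set{(x_1,y_1,x_2,y_2,x_3,y_3) \in B^6 \mid x_1 = y_1 \vee x_2 = y_2 \vee x_3 = y_3}. \]
Every unary map $g \colon B \to B$ preserves $D$, since $x_j = y_j$ implies $g(x_j) = g(y_j)$. Now let $f$ be any polymorphism of $\bB$; by assumption $f$ is essentially unary, i.e.\ $f(z_1,\dots,z_k) = g(z_i)$ for some unary polymorphism $g$ and some coordinate $i$, and applied componentwise such an $f$ sends a tuple to $g$ of its $i$-th argument, so $f$ preserves $D$ exactly when $g$ does. Hence every polymorphism of $\bB$ preserves $D$, and therefore $D$ is pp-definable in $\bB$. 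This step would fail for an essential binary polymorphism $h$, since one can witness membership in $D$ via one disjunct in the first argument of $h$ and via a different disjunct in the second, producing an image outside $D$; this is exactly why essential unarity of all polymorphisms is needed. The relation $\neq$ lies in the signature of $\bB$ and is trivially pp-definable.

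It then remains to reduce $3$-SAT to the CSP of the structure $(B;\neq,D)$. Given a $3$-CNF formula over variables $v_1,\dots,v_m$, I would introduce two additional ``pole'' variables $t,f$ with the constraint $t \neq f$; for each $v_i$ the constraint $D(v_i,t,v_i,f,v_i,f)$, which forces $v_i \in \{t,f\}$; and for each clause $\ell_1 \vee \ell_2 \vee \ell_3$ the constraint $D(u_1,c_1,u_2,c_2,u_3,c_3)$, where $u_s$ is the variable underlying $\ell_s$ and $c_s \ceq t$ if $\ell_s$ is positive and $c_s \ceq f$ otherwise. Reading $v_i = t$ as ``true'' and $v_i = f$ as ``false'', a satisfying Boolean assignment corresponds exactly to a solution over $\bB$: for the forward direction one uses that $B$ has two distinct elements to realise the poles, and for the converse one uses that $t \neq f$ makes the two truth values consistent. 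Since $\neq$ and $D$ are pp-definable in $\bB$, substituting their pp-definitions converts this into a polynomial-time reduction from $3$-SAT to $\Csp(\bB)$, establishing NP-hardness.

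The main obstacle I anticipate is the pp-definability step: identifying the right disjunctive relation and observing that essential unarity of all polymorphisms is precisely what renders it invariant; the reduction itself is routine bookkeeping. An alternative, less hands-on route would note that sending each essentially unary operation $g \circ \pi_i$ to the $i$-th projection on a two-element set is a uniformly continuous clone homomorphism from $\Pol(\bB)$ onto the clone of projections, and then invoke the general NP-hardness criterion for $\omega$-categorical CSPs (cf.~\cite{BPP-projective-homomorphisms}). I prefer the explicit reduction above, as it is self-contained given the tools already cited.
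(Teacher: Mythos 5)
Your proof is correct, and it follows the paper's strategy in its first half while replacing the second half by a self-contained argument. Like the paper, you combine the hypothesis that all polymorphisms are essentially unary with the Galois connection for $\omega$-categorical structures~\cite{BodirskyNesetrilJLC} to conclude that a relation defined purely in terms of equality is pp-definable in $\bB$. The paper does this wholesale: since the polymorphisms also preserve $\neq$, their unary parts are injective, hence they preserve \emph{every} relation that is first-order definable over the empty signature, so all such relations are pp-definable; NP-hardness then follows by citing the classification of equality constraint languages (\cite{ecsps}, Theorem~1) as a black box. You instead single out the one positive relation $D$ (a disjunction of three equalities), observe that its invariance needs nothing beyond essential unarity --- in particular no injectivity, so $\neq$ plays no role in this step --- and then prove NP-hardness of the CSP of $(B;\neq,D)$ by an explicit reduction from $3$-SAT, using $\neq$ only as a signature relation to keep the two poles apart. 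The two uses of $\neq$ are thus genuinely different: in the paper it constrains the polymorphisms, in your argument it appears in the gadget. Your reduction is sound (the pole and variable gadgets force each $v_i$ to take the value of $t$ or of $f$, the clause gadget exactly encodes the clause, and substituting the fixed pp-definition of $D$ gives a polynomial-time reduction to $\Csp(\bB)$). What your route buys is self-containedness, avoiding any appeal to the equality-CSP dichotomy; what the paper's route buys is brevity and a stronger intermediate conclusion, namely that \emph{all} equality-definable relations, not just $D$, are pp-definable in $\bB$.
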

\begin{proof}
  Let $E$ be the set of all relations (on the domain of $\bB$) that have a first-order definition over the empty signature.
  If all polymorphisms of $\bB$ are essentially unary and preserve $\neq$, they preserve all relations in $E$. Therefore, all relations in $E$ are pp-definable in $\bB$ (see~\cite{BodirskyNesetrilJLC}, Theorem 4). In this case, there is a finite signature reduct of $\bB$ which has an NP-hard CSP (see~\cite{ecsps}, Theorem~1).
\end{proof}

Hence, it suffices to show that the existence of an essential polymorphism of the generic combination of two countably infinite $\omega$-categorical structures $\bB_1$ and $\bB_2$ implies the existence of a binary injective polymorphism. The key technical result is the following proposition.

  \begin{proposition}\label{prop:key}
   Let $\bB$ be the generic combination of two $\omega$-categorical relational structures $\bB_1$ and $\bB_2$ such that the following holds:
   \begin{itemize}
   \item $\bB$ has a binary essential polymorphism,
   \item both $\bB_1$ and $\bB_2$ are without algebraicity,
   \item $\bB_1$ can prevent crosses,
   \item $\bB_2$ is 2-set-transitive and $\neq$ is pp-definable in $\bB_2$.
   \end{itemize}
Then $\bB_2$  has a binary injective polymorphism.
\end{proposition}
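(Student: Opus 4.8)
The plan is to construct a binary injective polymorphism of $\bB_2$; by Theorem~\ref{thm:hdr} this is equivalent to convexity of $\Th(\bB_2)$, which is what the proposition asserts (the remaining cases of Theorem~\ref{thm:r2} then follow from the Nelson--Oppen argument and Proposition~\ref{prop:ecsp-hard}). First I would reduce the target to a purely coordinatewise statement. Since $\neq$ is pp-definable in $\bB_2$, every polymorphism of $\bB_2$ preserves $\neq$, so a binary polymorphism $g$ of $\bB_2$ satisfies $g(a_1,a_2)=g(b_1,b_2)\Rightarrow a_1=b_1 \vee a_2=b_2$. Hence $g$ is injective as a map $B^2\to B$ if and only if it is injective in each coordinate separately, i.e.\ $g(a,x)=g(a,y)\Rightarrow x=y$ and $g(x,a)=g(y,a)\Rightarrow x=y$ for all $a,x,y$. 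By $\omega$-categoricity and local closure of the polymorphism clone it suffices to produce such a $g$ on every finite subset of $B$ and then pass to the limit by compactness.

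The inputs are the essential polymorphism $f$ and its essentiality witnesses $\mybar p,\mybar q,\mybar r,\mybar s$: writing $p_2=q_2$ and $r_1=s_1$, we have $f(p_1,p_2)\neq f(q_1,p_2)$ and $f(r_1,r_2)\neq f(r_1,s_2)$, so $f$ separates the first coordinate at one configuration and the second at another. Since $f$ preserves all $\tau_2$-relations, it is in particular a polymorphism of $\bB^{\tau_2}\cong\bB_2$. Next I would feed cross prevention of $\bB^{\tau_1}\cong\bB_1$ into $f$: let $\phi$ be a cross prevention formula as in Definition~\ref{def:preventCrosses}, viewed as a pp $\tau_1$-formula over $\bB$. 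For any solution of $\phi\wedge x=y$ with image $(x_1,x_1,u_1,v_1)$ and any solution of $\phi\wedge u=v$ with image $(x_2,y_2,u_2,u_2)$, where $x_1,u_1,v_1$ and $x_2,y_2,u_2$ are each injective, preservation of the relation defined by $\phi$ forces $(f(x_1,x_2),f(x_1,y_2),f(u_1,u_2),f(v_1,u_2))$ to be a solution of $\phi$; were it to satisfy both $x=y$ and $u=v$ it would violate clause~(3) of Definition~\ref{def:preventCrosses}. Thus
$$ f(x_1,x_2)\neq f(x_1,y_2) \quad\text{or}\quad f(u_1,u_2)\neq f(v_1,u_2), $$
i.e.\ $f$ separates the second coordinate at $(x_1;x_2,y_2)$ or the first coordinate at $(u_1,v_1;u_2)$.

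The decisive observation is that $\phi$ constrains only the $\tau_1$-reduct. Therefore, by the freeness and splitting properties of the generic combination (Lemma~\ref{lem:combine-2}, in the parametrised form of Lemma~\ref{lem:genericPropertiesWithParameters}), the $\tau_2$-type of the six elements above may be prescribed arbitrarily subject only to the equality pattern; and because $\bB_2$ is $2$-set-transitive, that type is determined by the finite datum of which of these elements coincide. I would use this to canonise the coordinate-separating behaviour of $f$ on $\bB_2$: the pointwise witnesses are transported across the at most two orbits of distinct pairs, and the disjunction above rules out the one situation — a simultaneous first- and second-coordinate collapse arranged as in Figure~\ref{fig:preventCrosses} — that would obstruct injectivity. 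Assembling these separations through the interpolation/compactness argument of the first paragraph yields a binary polymorphism of $\bB_2$ injective in the second coordinate; the symmetric argument gives one injective in the first coordinate, and the two are combined into a single polymorphism injective in both coordinates by composition. Being $\neq$-preserving, this polymorphism is injective, and Theorem~\ref{thm:hdr} finishes the proof.

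The main obstacle is precisely the disjunction in the displayed formula: we cannot dictate which coordinate $f$ separates, and a bare collapse $f(a,p)=f(a,q)$ occurs at fixed elements whose $\tau_1$-type is whatever it happens to be in $\bB$, so it cannot simply be inserted into the $\tau_1$-cross configuration required by $\phi$. The real work is therefore to canonise $f$ (or a derived binary clone element) so that its coordinate-injectivity behaviour becomes orbit-determined — using only $2$-set-transitivity and the absence of algebraicity, rather than a Ramsey property — after which collapses do transport and can be placed into the cross configuration to contradict cross prevention. Carrying out this canonisation, together with the bookkeeping for realising the cross with parameters via Lemma~\ref{lem:genericPropertiesWithParameters}, is the technically delicate heart of the argument.
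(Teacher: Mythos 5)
Your proposal assembles the right ingredients and correctly identifies where the difficulty lies, but it stops exactly at the point where the paper's actual proof begins: you write that "the real work is therefore to canonise $f$" so that its coordinate-separating behaviour becomes orbit-determined, and you defer that canonisation as "the technically delicate heart of the argument" without carrying it out. That step is precisely the paper's Lemma~\ref{lem:witnessesInAllOrbits}, and the idea you are missing is a contradiction argument: first use $2$-set-transitivity to place one essentiality witness pair $(\mybar{p},\mybar{q})$ of $f$ so that $(p_1,q_1)$ lies in the prescribed orbit; then observe that the \emph{failure} to complete it to a full witness configuration is itself a universally quantified statement, namely that $f(\tilde{c}_1,\tilde{c}_2)=f(\tilde{c}_1,\tilde{d}_2)$ for \emph{all} elements in certain $\Aut(\bB_2)$-orbits with parameters $\mybar{p},\mybar{q}$ (and symmetrically for the second witness pair $(\mybar{r},\mybar{s})$). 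This failure is thus expressible as a first-order formula over $\bB_2$ with parameters admitting an injective solution avoiding the parameters (here no algebraicity and Neumann's lemma are needed). Genericity, in the parametrised form of Lemma~\ref{lem:genericPropertiesWithParameters}, then lets one overlay the $\bB_1$ cross-prevention gadget $\theta(x_1,x_1',y_1,y_1)\wedge\theta(x_2,x_2,y_2,y_2')$ on any such $\bB_2$-configuration (the paper's Lemma~\ref{lem:noCrossPatternPossible}), forcing a cross for $f$ and contradicting that $f$ is a polymorphism of $\bB_1$. Without this mechanism your "bare collapse at fixed elements" objection is fatal to your own plan, since you give no way to transport collapses into the cross configuration.

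Two further points. First, your assertion that by $2$-set-transitivity the $\tau_2$-type of the six relevant elements "is determined by the finite datum of which of these elements coincide" is false: $2$-set-transitivity controls orbits of pairs only, and orbits of triples or quadruples in $\bB_2$ are in general not determined by equality patterns --- this is exactly why Lemma~\ref{lem:witnessesInAllOrbits} has genuine content. Second, your endgame (compactness to build a polymorphism injective in one coordinate, a symmetric one for the other, then "combine by composition") is not justified; composing a first-coordinate-injective with a second-coordinate-injective $\neq$-preserving operation does not obviously yield a fully injective one. The paper sidesteps this entirely: it verifies the third condition of Theorem~\ref{thm:hdr} (joint satisfiability of two disequalities) by applying $f$ to $(\alpha_1\circ s_1,\alpha_2\circ s_2)$ for suitable $\alpha_1,\alpha_2\in\Aut(\bB_2)$, where $s_1,s_2$ are the two given satisfying assignments, and lets Theorem~\ref{thm:hdr} produce the binary injective polymorphism.
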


To make the technical part of the proof of Proposition~\ref{prop:key} more accessible and clarify which condition is needed where, we will break it into two lemmata.

\begin{lemma} \label{lem:noCrossPatternPossible}
  Let $\bB$ be the generic combination of two $\omega$-categorical relational structures $\bB_1$ and $\bB_2$.  Let $\bB_1$ be a structure which can prevent crosses and without algebraicity and let $\varphi(x_1, x_2, x_1', y_1, y_2, y_2';\mybar{a})$ be a first-order formula over $\bB_2$ with parameter tuple $\mybar{a}$ such that there exists a solution $s$ for $\varphi$ which is injective and the image of $s$ does not contain any value from $\mybar{a}$. Then, for any binary polymorphism $f\in \Pol(\bB_1)$ there exist $x_1, x_2, x_1', y_1, y_2, y_2'$ such that
  \begin{equation}
  \varphi(x_1, x_2, x_1', y_1, y_2, y_2',\mybar{a}) \AND \big( f(x_1,x_2) \neq f(x_1', x_2) \OR  f(y_1, y_2) \neq f(y_1, y_2') \big). \label{eq:crossEqualities}
  \end{equation}
\end{lemma}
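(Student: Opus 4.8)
The plan is to produce, on a single injective tuple from the common domain $B$ of $\bB$, the cross-prevention configuration inside the reduct $\bB^{\tau_1}\cong\bB_1$ together with the given solution of $\varphi$ inside $\bB^{\tau_2}\cong\bB_2$, and then to apply $f$ to two $\phi$-solutions read off from this tuple. The third clause of Definition~\ref{def:preventCrosses} will then rule out that $f$ collapses both ``directions'' at once, which is exactly the disjunction in~\eqref{eq:crossEqualities}. Note that beyond the existence of an injective, parameter-avoiding solution of $\varphi$, no property of $\bB_2$ enters; the work is done by $\bB_1$ and by the freeness of the generic combination.

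Fix a cross prevention formula $\phi(x,y,u,v)$ of $\bB_1$. I would identify the six variables of the statement with the two $\phi$-patterns so that the forbidden pattern ``$x=y$ and $u=v$ simultaneously'' matches the conjunction negated in~\eqref{eq:crossEqualities}: the triple $(y_1,x_1,x_1')$ is to be an $(x,u,v)$-injective solution of $\phi\AND x=y$, i.e.\ I demand $\phi(y_1,y_1,x_1,x_1')$, and the triple $(y_2,y_2',x_2)$ is to be an $(x,y,u)$-injective solution of $\phi\AND u=v$, i.e.\ I demand $\phi(y_2,y_2',x_2,x_2)$. Since $\bB_1$ has no algebraicity, its age is a strong amalgamation class (Proposition~\ref{prop:noAlgStrongAmg}); placing the two triples disjointly and then moving the result off the finitely many entries of the parameter tuple $a$, I obtain an injective $p\in B^6$ whose $\tau_1$-type realises both patterns and whose entries avoid $a$. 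On the other side I take $q\in B^6$ to be the image of the given injective solution $s$ of $\varphi$, whose entries avoid $a$ by hypothesis.

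Next I would merge the two types. Applying Lemma~\ref{lem:genericPropertiesWithParameters}, equation~\eqref{eq:freeWithParams}, to the injective targets $p$, $q$ and the parameter tuple $a$ (whose entries are distinct from those of $p$ and of $q$), I get $d\in\Aut_{a}(\bB^{\tau_1})p\cap\Aut_{a}(\bB^{\tau_2})q$; I index the coordinates of $d$ by the six variable names. As an $\Aut(\bB^{\tau_1})$-image of $p$, the tuple $d$ is injective and shares the $\tau_1$-type of $p$, so $\phi(d_{y_1},d_{y_1},d_{x_1},d_{x_1'})$ and $\phi(d_{y_2},d_{y_2'},d_{x_2},d_{x_2})$ hold in $\bB_1$; as an $\Aut_{a}(\bB^{\tau_2})$-image of $q$, it shares the $\tau_2$-type of $s$ over $a$, so $\varphi(d_{x_1},d_{x_2},d_{x_1'},d_{y_1},d_{y_2},d_{y_2'},a)$ holds. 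Setting $s_1\ceq(d_{y_1},d_{y_1},d_{x_1},d_{x_1'})$ and $s_2\ceq(d_{y_2},d_{y_2'},d_{x_2},d_{x_2})$ and applying $f$ coordinatewise to the pair $(s_1,s_2)$, the tuple $\big(f(d_{y_1},d_{y_2}),\,f(d_{y_1},d_{y_2'}),\,f(d_{x_1},d_{x_2}),\,f(d_{x_1'},d_{x_2})\big)$ again satisfies $\phi$, because $f\in\Pol(\bB_1)$ preserves the pp-definable relation defined by $\phi$. By clause~(3) of Definition~\ref{def:preventCrosses} it cannot have both its first two coordinates equal and its last two coordinates equal, so $f(d_{y_1},d_{y_2})\neq f(d_{y_1},d_{y_2'})$ or $f(d_{x_1},d_{x_2})\neq f(d_{x_1'},d_{x_2})$; this is~\eqref{eq:crossEqualities} with $x_1\ceq d_{x_1}$, $x_2\ceq d_{x_2}$, $x_1'\ceq d_{x_1'}$, $y_1\ceq d_{y_1}$, $y_2\ceq d_{y_2}$, $y_2'\ceq d_{y_2'}$.

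I expect the only real difficulty to be organisational rather than conceptual: one has to choose the assignment of the six variables to the two $\phi$-patterns so that the single forbidden ``cross'' of $\phi$ lines up with the conjunction of the two equalities whose negation appears in~\eqref{eq:crossEqualities}, and one has to be sure that all of this type information --- the two $\phi$-patterns for $\tau_1$ and the $\varphi$-solution for $\tau_2$ --- can be carried by one injective tuple of $B$. The latter is precisely the content of the freeness property~\eqref{eq:freeWithParams} of the generic combination, and the former is handled once the coordinates are labelled as above; there are no genuine analytic estimates to carry out.
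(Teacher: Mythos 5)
Your proof is correct and follows essentially the same route as the paper's: realise both cross-prevention patterns injectively on the $\tau_1$-side while avoiding the parameters (the paper cites Neumann's lemma where you invoke strong amalgamation plus moving off a finite set, both resting on no algebraicity), merge this tuple with the $\varphi$-solution via Lemma~\ref{lem:genericPropertiesWithParameters}, and apply $f$ coordinatewise so that clause~(3) of Definition~\ref{def:preventCrosses} forces the disjunction~\eqref{eq:crossEqualities}. The only cosmetic difference is that you transpose which of the two $\phi$-patterns is carried by the first and which by the second coordinate of $f$; both assignments work.
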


\begin{proof}
  Let $\theta$ be a cross prevention formula of $\bB_1$. Define
  \begin{displaymath}
    \rho(x_1, x_2, x_1', y_1,y_2, y_2') \cequiv \theta(x_1,x_1',y_1,y_1) \AND \theta(x_2,x_2,y_2,y_2').
  \end{displaymath}
 Because $\bB_1$ is without algebraicity and there are injective solutions for $\theta(x_1,x_1',y_1,y_1)$ and $\theta(x_2,x_2,y_2,y_2')$ we can use Neumann's lemma (see Corollary 4.2.2 in \cite{HodgesLong}) to find a solution $\mybar{t}_1$ for $\rho$ that is injective and avoids all entries in $\mybar{a}$ in its image. Hence, there are injective solutions $\mybar{t}_1, \mybar{t}_2$ for $\rho, \varphi$ respectively that avoid all values in $\mybar{a}$ in their image and therefore Lemma~\ref{lem:genericPropertiesWithParameters} is applicable and there exists a solution $\mybar{t}$ for $\rho \AND \varphi$.\\
  We now look at the images of $f$:
  \begin{align*}
    f(t(x_1), t(x_2)) &= x\\
    f(t(x_1'),t(x_2)) &= x'\\
    f(t(y_1), t(y_2)) &= y\\
    f(t(y_1), t(y_2')) &= y'
  \end{align*}

As $f$ is a polymorphism of $\bB_1$ the tuple $(x,x',y,y')$ must satisfy $\theta$. Hence, either $x\neq x'$ or $y\neq y'$.
\end{proof}

\begin{lemma}\label{lem:witnessesInAllOrbits}
  Let $\bB$ be the generic combination of two $\omega$-categorical relational structures $\bB_1$ and $\bB_2$,  both of which are without algebraicity.  Furthermore, assume that $\bB_1$ can prevent crosses and that $\bB_2$ is 2-set-transitive. Let $B$ be the domain of $\bB$ and choose $\mybar{a},\mybar{b},\mybar{c},\mybar{d}\in B^2$ such that $a_1 \neq b_1,\,  a_2 = b_2$  and $c_1 = d_1, c_2 \neq d_2$.
  
 Then, for any binary essential polymorphism $f\in \Pol(\bB_1)$ there exist $\mybar{\tilde{a}}, \mybar{\tilde{b}}, \mybar{\tilde{c}}, \mybar{\tilde{d}}\in B^2$ such that $f(\tilde{a}_1,\tilde{a}_2) \neq f(\tilde{b}_1, \tilde{b}_2)$ and $f(\tilde{c}_1,\tilde{c}_2) \neq f(\tilde{d}_1, \tilde{d}_2)$ and $(\tilde{a}_i, \tilde{b}_i, \tilde{c}_i, \tilde{d}_i)\in \Aut(\bB_2)(a_i,b_i,c_i, d_i)$ for $i=1$ and $i=2$.
\end{lemma}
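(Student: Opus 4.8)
The plan is to reduce the claim to generating, \emph{in place}, configurations on which $f$ witnesses its essentiality while their $\bB_2$-coordinates are pinned to the prescribed orbits. Since $f$ is essential, fix witnesses $p,q,r,s\in B^2$ with $p_2=q_2$, $r_1=s_1$, $f(p_1,p_2)\neq f(q_1,q_2)$ and $f(r_1,r_2)\neq f(s_1,s_2)$. First I dispose of the easy case: if $f$ is injective in each of its two arguments, then the prescribed tuples themselves already work, since $f(a_1,a_2)\neq f(b_1,a_2)$ holds by injectivity in the first argument (as $a_1\neq b_1$) and $f(c_1,c_2)\neq f(c_1,d_2)$ by injectivity in the second (as $c_2\neq d_2$), while $(a_i,b_i,c_i,d_i)\in\Aut(\bB_2)(a_i,b_i,c_i,d_i)$ trivially. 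Hence I may assume that $f$ collapses some pattern in at least one coordinate.

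The central difficulty is that a witness cannot simply be transported into a prescribed $\bB_2$-orbit: neither an automorphism of $\bB_2$ nor the generic combination commutes with $f$, so moving the underlying points generally destroys the inequality. Instead I generate the inequalities in place via Lemma~\ref{lem:noCrossPatternPossible}. By Theorem~\ref{thm:ryll} the two conditions that $(x_1,x_1',y_1,y_1)$ lies in $\Aut(\bB_2)(a_1,b_1,c_1,d_1)$ and that $(x_2,x_2,y_2,y_2')$ lies in $\Aut(\bB_2)(a_2,b_2,c_2,d_2)$ are first-order over $\bB_2$; let $\varphi$ be their conjunction, so that any solution with $\tilde a=(x_1,x_2)$, $\tilde b=(x_1',x_2)$, $\tilde c=(y_1,y_2)$, $\tilde d=(y_1,y_2')$ satisfies the required orbit conditions in both coordinates by construction. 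As $\bB_2$ is $2$-set-transitive and without algebraicity, Neumann's lemma provides an injective solution of $\varphi$ avoiding any prescribed parameters (handling any coincidences among the prescribed coordinates by substitution, as usual), so Lemma~\ref{lem:noCrossPatternPossible} applies and yields a configuration in the prescribed joint orbit for which $f(\tilde a_1,\tilde a_2)\neq f(\tilde b_1,\tilde a_2) \OR f(\tilde c_1,\tilde c_2)\neq f(\tilde c_1,\tilde d_2)$.

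It then remains to upgrade this disjunction to the required conjunction. If both inequalities already hold we are finished. Otherwise the configuration exhibits a collapse of one type inside the prescribed orbit, say $f(\tilde c_1,\tilde c_2)=f(\tilde c_1,\tilde d_2)$ with $\tilde c_2\neq \tilde d_2$. To obtain the missing inequality I apply Lemma~\ref{lem:noCrossPatternPossible} a second time, now treating the already-correct pair as a parameter tuple and using Lemma~\ref{lem:genericPropertiesWithParameters} to realise the new pattern in the prescribed joint orbit relative to these parameters. Here $2$-set-transitivity of $\bB_2$ is what lets me place a \emph{collapsing partner} of the opposite type in an orbit simultaneously compatible with the cross-prevention formula of $\bB_1$ and with the target orbit; feeding this partner into the disjunction of Lemma~\ref{lem:noCrossPatternPossible} forces exactly the inequality that was still missing, while the parameters keep the previously secured witness intact.

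I expect the main obstacle to be precisely this promotion of the disjunction to a conjunction: cross-prevention inherently produces only a disjunction, so securing \emph{both} inequalities within a \emph{single} configuration whose joint $\bB_2$-orbit is the prescribed one forces one to coordinate two applications of Lemma~\ref{lem:noCrossPatternPossible}, supplying collapsing partners of the correct types and orbits, keeping all four tuples in one prescribed joint orbit, and maintaining injective realisability at each stage through no-algebraicity and $2$-set-transitivity. The delicate point is to check that such partners can always be arranged --- in particular that the case in which $f$ is injective in exactly one coordinate does not obstruct the construction; once the two disjunctions are forced in the intended directions, the resulting tuples satisfy both inequalities and, by the choice of $\varphi$, the prescribed orbit conditions in each coordinate.
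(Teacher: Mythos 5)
Your overall strategy --- generating the inequalities in place via Lemma~\ref{lem:noCrossPatternPossible} with the orbit conditions written as first-order formulas over $\bB_2$ --- is the right one, and it matches the paper's; but the step you yourself identify as the crux (promoting the disjunction to a conjunction) is exactly the step you have not actually supplied, and as sketched it does not work. Lemma~\ref{lem:noCrossPatternPossible} always returns a disjunction over its \emph{two} variable patterns, and since you cannot control which solution it returns, the only way to ``force exactly the inequality that was still missing'' is to know in advance that $f$ collapses on \emph{all} patterns of the other type satisfying your formula. After your first application you do have such a universal collapse statement, but only for \emph{vertical} patterns anchored at the secured pair $(\tilde{a},\tilde{b})$ (otherwise you would already be done); feeding that into a second application excludes the vertical disjunct and forces a new \emph{horizontal} inequality --- another copy of what you already have, not the missing one. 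To exclude the horizontal disjunct instead, you would need a vertical pair with a known $f$-inequality lying in the orbital of $(c_2,d_2)$, which is precisely what you are trying to construct: the argument is circular. The paper escapes this by a different anchoring: the vertical patterns of the cross-prevention formula are tied (via the orbit conditions) to the first essentiality witness $(p,q)$, whose pair $(p_1,q_1)$ lies in the orbital of $(a_1,b_1)$ by 2-set-transitivity after possibly swapping $p$ and $q$, and the horizontal patterns are tied to the second witness $(r,s)$, with $(r_2,s_2)$ in the orbital of $(c_2,d_2)$. With this anchoring \emph{no} disjunct needs to be excluded: a forced vertical inequality combines with $(p,q)$, and a forced horizontal inequality combines with $(r,s)$, the anchor itself supplying the other inequality of the configuration. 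You fix $(r,s)$ in your first sentence but it never re-enters your argument; if your ``collapsing partner of the opposite type'' is meant to be $(r,s)$ (note it is a \emph{non}-collapsing pair), this is the idea you need to spell out --- and once you do, your preliminary application of the lemma becomes superfluous, since $(p,q)$ can play the role of $(\tilde{a},\tilde{b})$ directly.

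There is a second, smaller gap: the degenerate cases with coincidences among the prescribed coordinates (e.g.\ $a_1=c_1$) cannot be ``handled by substitution, as usual.'' If $a_1=c_1$, then every tuple in $\Aut(\bB_2)(a_1,b_1,c_1,d_1)$ has its first and third entries equal, so your joint-orbit formula forces $x_1=y_1$ in every solution; it therefore has no injective solution, and Lemma~\ref{lem:noCrossPatternPossible} is simply not applicable. The paper devotes a separate case to this: it replaces the degenerate triples by injective ones whose pair-orbits agree with the given ones (possible by no-algebraicity), runs the main argument there, and then recovers witnesses for the original orbits by a pigeonhole argument --- e.g.\ from $f(\tilde{a}'_1,\tilde{a}'_2)\neq f(\tilde{b}'_1,\tilde{a}'_2)$ it follows that $f(\tilde{c}'_1,\tilde{a}'_2)$ differs from at least one of the two, and similarly in the other coordinate. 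Your proof needs an analogous case analysis before it is complete.
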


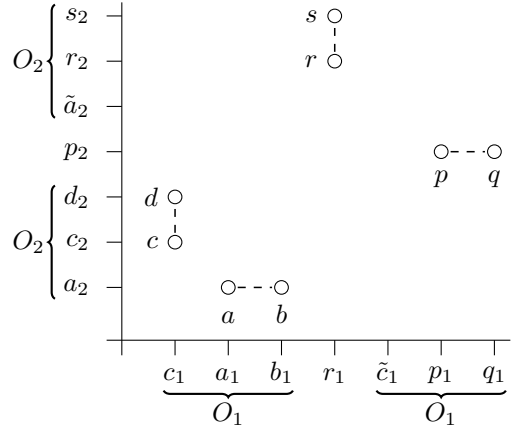
\begin{figure}
  \begin{center}
  \begin{tikzpicture}
    \def\xbetweenspace{0.7};
    \def\ybetweenspace{0.6};
    \def\belowspace{-0.5};
    \def\belowspaceinside{-0.6};
    \def\leftspace{0.1};
    \def\xoffset{0.7};
    \def\yoffset{0.7};
    \def\bracexextra{0.15};
    \def\braceyextra{0.15};
    
    \myhposition{\xoffset+0*\xbetweenspace}{0}{1}{node[align=left, above=\belowspace] (c1) {$c_1$}};
    \myhposition{\xoffset+1*\xbetweenspace}{0}{2}{node[align=left, above=\belowspace] (a1) {$a_1$}};
    \myhposition{\xoffset+2*\xbetweenspace}{0}{3}{node[align=left, above=\belowspace] (b1) {$b_1$}};
    \myhposition{\xoffset+3*\xbetweenspace}{0}{3b}{node[align=left, above=\belowspace] (b1) {$r_1$}};
    \draw[thick, decoration={brace,mirror,raise=1.9em},decorate] (\xoffset+0*\xbetweenspace-\bracexextra,0) -- (\xoffset+2*\xbetweenspace+\bracexextra,0) node[pos=0.5,anchor=north,yshift=-2.0em] {$O_1$};
    \myhposition{\xoffset+4*\xbetweenspace}{0}{4}{node[align=left, above=\belowspace] (ct) {$\tilde{c}_1$}};
    \myhposition{\xoffset+5*\xbetweenspace}{0}{5}{node[align=left, above=\belowspace] (p1) {$p_1$}};
    \myhposition{\xoffset+6*\xbetweenspace}{0}{6}{node[align=left, above=\belowspace] (q1) {$q_1$}};
    \draw[thick, decoration={brace,mirror,raise=1.9em},decorate] (\xoffset+4*\xbetweenspace-\bracexextra,0) -- (\xoffset+6*\xbetweenspace+\bracexextra,0) node[pos=0.5,anchor=north,yshift=-2.0em] {$O_1$};

    \myvposition{0}{\yoffset+0*\ybetweenspace}{7}{node[align=right, left=\leftspace] {$a_2$}};
    \myvposition{0}{\yoffset+1*\ybetweenspace}{8}{node[align=right, left=\leftspace] {$c_2$}};
    \myvposition{0}{\yoffset+2*\ybetweenspace}{9}{node[align=right, left=\leftspace] {$d_2$}};
    \myvposition{0}{\yoffset+3*\ybetweenspace}{9b}{node[align=right, left=\leftspace] {$p_2$}};
    \draw[thick, decoration={brace,raise=2.5em},decorate] (0,\yoffset+0*\ybetweenspace-\braceyextra) -- (0,\yoffset+2*\ybetweenspace+\braceyextra) node[pos=0.5,anchor=east,xshift=-2.5em] {$O_2$};
    \myvposition{0}{\yoffset+4*\ybetweenspace}{10}{node[align=right, left=\leftspace] {$\tilde{a}_2$}};
    \myvposition{0}{\yoffset+5*\ybetweenspace}{11}{node[align=right, left=\leftspace] {$r_2$}};
    \myvposition{0}{\yoffset+6*\ybetweenspace}{12}{node[align=right, left=\leftspace] {$s_2$}};
    \draw[thick, decoration={brace,raise=2.5em},decorate] (0,\yoffset+4*\ybetweenspace-\braceyextra) -- (0,\yoffset+6*\ybetweenspace+\braceyextra) node[pos=0.5,anchor=east,xshift=-2.5em] {$O_2$};

    \mynode{\xoffset+1*\xbetweenspace}{\yoffset+0*\ybetweenspace}{a}{node[align=right, above=\belowspaceinside] {$a$}};
    \mynode{\xoffset+2*\xbetweenspace}{\yoffset+0*\ybetweenspace}{b}{node[align=right, above=\belowspaceinside] {$b$}};
    \mynode{\xoffset+5*\xbetweenspace}{\yoffset+3*\ybetweenspace}{p}{node[align=right, above=\belowspaceinside] {$p$}};
    \mynode{\xoffset+6*\xbetweenspace}{\yoffset+3*\ybetweenspace}{q}{node[align=right, above=\belowspaceinside] {$q$}};
    \mynode{\xoffset+0*\xbetweenspace}{\yoffset+1*\ybetweenspace}{c}{node[align=right, left=\leftspace] {$c$}};
    \mynode{\xoffset+0*\xbetweenspace}{\yoffset+2*\ybetweenspace}{d}{node[align=right, left=\leftspace] {$d$}};
    \mynode{\xoffset+3*\xbetweenspace}{\yoffset+5*\ybetweenspace}{r}{node[align=right, left=\leftspace] {$r$}};
    \mynode{\xoffset+3*\xbetweenspace}{\yoffset+6*\ybetweenspace}{s}{node[align=right, left=\leftspace] {$s$}};

    \draw[different] (a) -- (b);
    \draw[different] (c) -- (d);
    \draw[different] (p) -- (q);
    \draw[different] (r) -- (s);

    \systemAxis{0.0}{0.0}{\xoffset+6.3*\xbetweenspace}{\yoffset+6.3*\ybetweenspace}; 
    
  \end{tikzpicture}
\end{center}
\caption{An illustration of Case 1 in the proof of Lemma~\protect\ref{lem:witnessesInAllOrbits}. Dashed edges indicate (potential) disequalities between function values.}
\label{fig:Case1}
\end{figure}

\begin{proof}
Case 1: Both $(c_1, a_1, b_1)$ and $(a_2, c_2,d_2)$ are injective tuples (see Figure~\ref{fig:Case1}).

  Define $O_1 \ceq \Aut(\bB_2)(c_1, a_1,b_1)$ and $O_2 \ceq \Aut(\bB_2)(a_2, c_2,d_2)$. Notice that $O_1$ and $O_2$ can be defined by first-order formulas because $\bB_2$ is $\omega$-categorical. We start by using one of the witnesses of essentiality of $f$ that is guaranteed to exist by definition, i.e., $\mybar{p},\mybar{q}\in B^2$ such that  $f(p_1,p_2) \neq f(q_1, q_2)$ with $p_2 = q_2$. Due to 2-set-transitivity, we can assume that there exists $\alpha \in \Aut(\bB_2)$ such that $\alpha(p_1) = a_1$ and $\alpha(q_1) = b_1$ (otherwise, swap $p$ and $q$).
  Now look at all $\tilde{c}_1\in B$ such that $( \tilde{c}_1, p_1, q_1)\in O_1$. 
  If for $\mybar{\tilde{a}} \ceq \mybar{p},\, \mybar{\tilde{b}} \ceq \mybar{q}$ there are $\tilde{c}_2,\,\tilde{d}_2$ such that $(\mybar{\tilde{a}}, \mybar{\tilde{b}},\mybar{\tilde{c}},\,\mybar{\tilde{d}})$ is a witness for the statement of the lemma, we are done. Otherwise, for all $\tilde{c}_1, \tilde{c}_2,  \tilde{d}_2 \in B$ the following holds:
  \begin{displaymath}
    (\tilde{c}_1, p_1, q_1)\in O_1 \AND (p_2, \tilde{c}_2,  \tilde{d}_2)\in O_2 \Rightarrow f( \tilde{c}_1,  \tilde{c}_2) = f( \tilde{c}_1, \tilde{d}_2)
  \end{displaymath}
  In this case we consider the second witness of essentiality of $f$ guaranteed to exist by definition, i.e., $\mybar{r},\mybar{s}\in B^2$ with  $f(r_1,r_2) \neq f(s_1, s_2)$ and $r_1 = s_1$.
 Again due to 2-set-transitivity, there exists $\beta \in \Aut(\bB_2)$ such that $\beta(r_2) = c_2$ and $\beta(s_2) = d_2$ (otherwise swap $r$ and $s$). Assume now, that for all $\tilde{a}_1, \tilde{b}_1,  \tilde{a}_2 \in B$ the following holds:
  \begin{equation}\label{eq:noGoodABTilde}
   (r_1, \tilde{a}_1, \tilde{b}_1)\in O_1 \AND  ( \tilde{a}_2, r_2, s_2)\in O_2 \Rightarrow f(\tilde{a}_1, \tilde{a}_2) = f(\tilde{b}_1, \tilde{a}_2).
  \end{equation}
We now argue why we can apply Lemma~\ref{lem:noCrossPatternPossible} to
  \begin{align*}
   \phi(x_1, x_2, x_1', y_1,y_2, y_2'; p_1, p_2, q_1, r_1, r_2, s_2) \cequiv &\, (y_1, p_1,q_1)\in O_1 \AND (p_2,y_2,y'_2)\in O_2  \AND \\
                       & \, (r_1, x_1,x'_1)\in O_1 \AND (x_2,r_2, s_2)\in O_2,
  \end{align*}
  where $\mybar{p},\mybar{q},\mybar{r},\mybar{s}$ are parameters.
  Define the set $T \ceq \set{y_1 \mid (y_1, p_1, q_1)\in O_1}$.  $T$ is non-empty because there is an automorphism $\alpha$ such that $\alpha(p_1) = a_1$ and $\alpha(q_1) = b_1$ and hence $\alpha^{-1}(c_1) \in T$. Furthermore, $a_1 \neq c_1 \neq b_1$ and hence $p_1 \neq \alpha^{-1}(c_1) \neq q_1$. Therefore, $T$ is infinite because $\bB_2$ has no algebraicity. 
  In a similar way, when considering all solutions for any other conjunct of $\phi$, each variable can take infinitely many values. For the two conjuncts with two free variables Neumann's lemma guarantees the existence of infinitely many tuples with pairwise distinct entries in the orbit. Therefore, $\phi$ is a first-order formula with parameters in $\bB_2$ which has an injective solution without any of the parameter values in its image.
  
An application of Lemma~\ref{lem:noCrossPatternPossible} yields that Implication~(\ref{eq:noGoodABTilde}) must be false. Hence, there are  $\tilde{a}_1, \tilde{b}_1,  \tilde{a}_2 \in B$ such that  $(r_1, \tilde{a}_1, \tilde{b}_1)\in O_1$ and $(\tilde{a}_2, r_2, s_2)\in O_2$ and $f(\tilde{a}_1, \tilde{a}_2) \neq f(\tilde{b}_1, \tilde{a}_2)$. Together with $\mybar{\tilde{c}} \ceq \mybar{r}$ and $\mybar{\tilde{d}} \ceq \mybar{s}$, the points $\mybar{\tilde{a}}, \mybar{\tilde{b}}, \mybar{\tilde{c}}, \mybar{\tilde{d}}$ satisfy all conditions from the statement.\\

  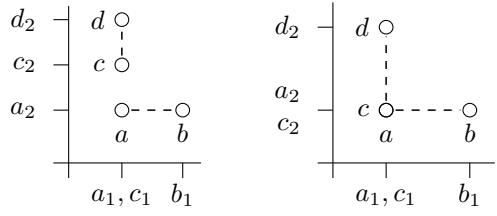
\begin{figure}
  \begin{center}
  \begin{tikzpicture}
    \def\xbetweenspace{0.8};
    \def\ybetweenspace{0.6};
    \def\belowspace{-0.5};
    \def\belowspaceinside{-0.55};
    \def\leftspace{0.1};
    \def\xoffset{0.7};
    \def\yoffset{0.7};
    
    \myhposition{\xoffset+0*\xbetweenspace}{0}{1}{node[align=left, above=\belowspace] {$a_1, c_1$}};
    \myhposition{\xoffset+1*\xbetweenspace}{0}{2}{node[align=left, above=\belowspace] {$b_1$}};
    \myvposition{0}{\yoffset+0*\ybetweenspace}{4}{node[align=right, left=\leftspace] {$a_2$}};
    \myvposition{0}{\yoffset+1*\ybetweenspace}{5}{node[align=right, left=\leftspace] {$c_2$}};
    \myvposition{0}{\yoffset+2*\ybetweenspace}{6}{node[align=right, left=\leftspace] {$d_2$}};
    \mynode{\xoffset+0*\xbetweenspace}{\yoffset+0*\ybetweenspace}{a}{node[align=right, above=\belowspaceinside] {$a$}};
    \mynode{\xoffset+1*\xbetweenspace}{\yoffset+0*\ybetweenspace}{b}{node[align=right, above=\belowspaceinside] {$b$}};
    \mynode{\xoffset+0*\xbetweenspace}{\yoffset+1*\ybetweenspace}{c}{node[align=right, left=\leftspace] {$c$}};
    \mynode{\xoffset+0*\xbetweenspace}{\yoffset+2*\ybetweenspace}{d}{node[align=right, left=\leftspace] {$d$}};

    \draw[different] (a) -- (b);
    \draw[different] (c) -- (d);

    \systemAxis{0.0}{0.0}{\xoffset+1.3*\xbetweenspace}{\yoffset+2.3*\ybetweenspace}; 
    
  \end{tikzpicture}  \qquad
  \begin{tikzpicture}
    \def\xbetweenspace{1.1};
    \def\ybetweenspace{1.1};
    \def\belowspace{-0.5};
    \def\belowspaceinside{-0.55};
    \def\leftspace{0.1};
    \def\xoffset{0.7};
    \def\yoffset{0.7};
    
    \myhposition{\xoffset+0*\xbetweenspace}{0}{1}{node[align=left, above=\belowspace] {$a_1,c_1$}};
    \myhposition{\xoffset+1*\xbetweenspace}{0}{2}{node[align=left, above=\belowspace] {$b_1$}};
    \myvposition{0}{\yoffset+0*\ybetweenspace}{4}{node[align=right, left=\leftspace] {$a_2$\\$c_2$}};
    \myvposition{0}{\yoffset+1*\ybetweenspace}{6}{node[align=right, left=\leftspace] {$d_2$}};
    \mynode{\xoffset+0*\xbetweenspace}{\yoffset+0*\ybetweenspace}{a}{node[align=right, above=\belowspaceinside] {$a$}};
    \mynode{\xoffset+1*\xbetweenspace}{\yoffset+0*\ybetweenspace}{b}{node[align=right, above=\belowspaceinside] {$b$}};
    \mynode{\xoffset+0*\xbetweenspace}{\yoffset+0*\ybetweenspace}{c}{node[align=right, left=\leftspace] {$c$}};
    \mynode{\xoffset+0*\xbetweenspace}{\yoffset+1*\ybetweenspace}{d}{node[align=right, left=\leftspace] {$d$}};

    \draw[different] (a) -- (b);
    \draw[different] (c) -- (d);

    \systemAxis{0.0}{0.0}{\xoffset+1.3*\xbetweenspace}{\yoffset+1.3*\ybetweenspace}; 
    
  \end{tikzpicture}  
\end{center}
\caption{An illustration of the subcases of Case 2 (up to permutation of variables) in the proof of Lemma~\protect\ref{lem:witnessesInAllOrbits}. Dashed edges indicate (potential) disequalities between function values.}
\label{fig:Case2}
\end{figure}
Case 2: $(c_1, a_1,b_1)$ or $(a_2, c_2, d_2)$  are not injective, i.e., $c_1 = a_1$ or $c_1 = b_1$ or $a_2 =c_2$ or $a_2=d_2$ (see Figure~\ref{fig:Case2}).

In this case, the witnesses can be constructed using Case 1 by starting with suitable injective tuples.
If $c_1 = a_1$ we consider a new tuple $(c'_1, a'_1, b'_1)$ which is injective such that the $\bB_2$ orbits of $(c_1, b_1)$, $(c'_1, a'_1)$ and $(c'_1, b'_1)$ are the same (for $c_1 = b_1$ we do the same but with the $\bB_2$ orbit of $(c_1,a_1)$ instead of $(c_1,b_1)$). This tuple exists due to no-algebraicity of $\bB_2$. Likewise, if  $a_2 = c_2$ we can find $(a'_2, c'_2, d'_2)$ injective such that the $\bB_2$ orbits of $(a_2, d_2)$, $(a'_2,c'_2)$ and $(a'_2, d'_2)$ are the same (for $a_2 = d_2$ we use the $\bB_2$ orbit of $(a_2,c_2)$ instead of $(a_2, d_2)$). If $(c_1, a_1,b_1)$ or $(a_2, c_2, d_2)$ was injective already, we keep this tuple but name them $a_i'$ etc. to simplify the presentation.
  
Now, we can apply Case 1 to $(c'_1, a'_1, b'_1)$ and $(a'_2, c'_2, d'_2)$ to find
$\mybar{\tilde{a}}', \mybar{\tilde{b}}', \mybar{\tilde{c}}', \mybar{\tilde{d}}'\in B^2$  as promised by this lemma.

   For $c_1=a_1$  both $(\tilde{c}'_1,\tilde{a}'_1, \tilde{c}'_1, \tilde{c}'_1)$ and $(\tilde{c}'_1,\tilde{b}'_1, \tilde{c}'_1, \tilde{c}'_1)$ are in the same orbit as $(a_1, b_1, c_1, d_1)$ because  $(\tilde{c}'_1, \tilde{a}'_1), (\tilde{c}'_1, \tilde{b}'_1)$ and $(c_1, b_1)$ have the same orbit and $a_1 = c_1 = d_1$. Likewise for $a_2 = c_2$ in the second coordinate, both $(\tilde{a}'_2,\tilde{a}'_2, \tilde{a}'_2,\tilde{d}'_2)$ and $(\tilde{a}'_2,\tilde{a}'_2, \tilde{a}'_2,\tilde{c}'_2)$ are in the same orbit as $(a_2, b_2, c_2, d_2)$.
     
   However, if $f(\tilde{a}'_1,\tilde{a}'_2) \neq f(\tilde{b}'_1, \tilde{a}'_2)$ then either $f(\tilde{a}'_1,\tilde{a}'_2) \neq f(\tilde{c}'_1, \tilde{a}'_2)$ or $f(\tilde{b}'_1,\tilde{a}'_2) \neq f(\tilde{c}'_1, \tilde{a}'_2)$. Likewise, either  $f(\tilde{c}'_1,\tilde{a}'_2) \neq f(\tilde{c}'_1, \tilde{c}'_2)$ or $f(\tilde{c}'_1,\tilde{a}'_2) \neq f(\tilde{c}'_1, \tilde{d}'_2)$.   
Therefore, there is a tuple $(\mybar{a}', \mybar{b}', \mybar{c}', \mybar{d}') = ((a_1', b_1', c_1', d_1'),(a_2',b_2', c_2', d_2'))$ in
  \begin{displaymath}
    \set{(\tilde{c}'_1,\tilde{a}'_1, \tilde{c}'_1, \tilde{c}'_1),\, (\tilde{c}'_1,\tilde{b}'_1, \tilde{c}'_1, \tilde{c}'_1)} \times \set{(\tilde{a}'_2,\tilde{a}'_2, \tilde{a}'_2,\tilde{c}'_2),\,(\tilde{a}'_2,\tilde{a}'_2, \tilde{a}'_2,\tilde{d}'_2)},
  \end{displaymath}
which satisfies the lemma.

  The procedure is analogous for other identifications.
 \end{proof}

 Notice that Lemma~\ref{lem:witnessesInAllOrbits} also holds for polymorphisms of $\bB$ because $\Pol(\bB) \subseteq \Pol(\bB_1)$. We are now ready to prove Proposition~\ref{prop:key}.

\begin{proof}[Proof of Proposition~\ref{prop:key}]
Let $\phi$ be a (finite) conjunction of atomic formulas over $\bB_2$ such that $\phi\AND x\neq y$ and $\phi\AND u\neq v$ are satisfiable and $s_1,\, s_2$ are satisfying assignments respectively. As $\bB_2$ is $\omega$-categorical and $\neq$ is pp-definable in $\bB_2$, Theorem~\ref{thm:hdr} is applicable. Hence, it is sufficient to prove satisfiability of $\phi\AND x\neq y \AND u\neq v$ in $\bB_2$.
  Let $f\in \Pol(\bB)$ be a binary essential polymorphism. If $s_1(u)\neq s_1(v)$ or $s_2(x)\neq s_2(y)$ we have found a satisfying assignment for $\phi\AND x\neq y \AND u\neq v$, which is what we wanted. Otherwise, due to Lemma~\ref{lem:witnessesInAllOrbits} above, there are witnesses $\mybar{\tilde{x}},\, \mybar{\tilde{y}}, \mybar{\tilde{u}}, \mybar{\tilde{v}}\in B^2$ of essentiality of $f$ such that $(\tilde{x}_1, \tilde{y}_1, \tilde{u}_1)$ is in the same $\bB_2$ orbit as $s_1(x, y, u)$ and $(\tilde{x}_2,\tilde{u}_2, \tilde{v}_2)$ is in the same $\bB_2$ orbit as $s_2(x, u, v)$. Hence, there exist automorphisms $\alpha_1, \alpha_2\in \Aut(\bB_2)$, such that
  \begin{align*}
    f(\alpha_1(s_1(x)), \alpha_2(s_2(x))) &= f(\tilde{x}_1, \tilde{x}_2) \\
                                          &\neq  f(\tilde{y}_1, \tilde{x}_2)\\
                                          &=  f(\alpha_1(s_1(y)), \alpha_2(s_2(x)))\\
                                          &= f(\alpha_1(s_1(y)), \alpha_2(s_2(y))) \quad \text{and}\\
    f(\alpha_1(s_1(u)), \alpha_2(s_2(u)))   &= f(\tilde{u}_1, \tilde{u}_2)\\
                                          &\neq  f(\tilde{u}_1, \tilde{v}_2)\\
                                          &=  f(\alpha_1(s_1(u)), \alpha_2(s_2(v)))\\
                                          &= f(\alpha_1( s_1(v)), \alpha_2(s_2(v))).
  \end{align*}
Therefore, $z\mapsto f((\alpha_1\circ s_1)(z), (\alpha_2\circ s_2)(z))$ for any variable $z$ that occurs in  $\phi\AND x\neq y \AND u\neq v$ is a solution for $\phi\AND x\neq y \AND u\neq v$.
\end{proof}

To apply Proposition~\ref{prop:key}, we need to prove the existence of binary essential polymorphisms of generic combinations $\bB$. 
For this, we use an idea that first appeared in~\cite{tcsps-journal} and was later generalized
in~\cite{Bodirsky-HDR-v8}, based on the following concept.
A permutation group $G$ on a set $B$ has the \emph{orbital extension property (OEP)} if there is an orbital $O$ such that for all $b_1,b_2 \in B$ there is an element $c \in B$ where $(b_1,c) \in O$ and $(b_2,c) \in O$. Notice that such $c$ must always be distinct from $b_1$ and $b_2$. The relevance of the OEP comes from the following lemma.

\begin{lemma}[K\'ara's Lemma; see~{\cite[Lemma 5.3.10]{Bodirsky-HDR-v8}}]
\label{lem:kara}
Let $\bB$ be a structure with an essential polymorphism and an automorphism group with the OEP. Then $\bB$ must have a binary essential polymorphism. 
\end{lemma}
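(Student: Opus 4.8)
The plan is to construct a binary polymorphism of $\bB$ directly and then check that it is essential, using the given essential polymorphism (say $g$, of some arity $n$) as raw material and the OEP as the engine of a local-to-global argument of exactly the kind that underlies Theorem~\ref{thm:hdr}. Recall that a binary polymorphism of $\bB$ is the same thing as a homomorphism $\bB^2 \to \bB$, where $\bB^2$ carries the relations of $\bB$ interpreted coordinatewise. By $\omega$-categoricity of $\bB$ (Theorem~\ref{thm:ryll}) and compactness, such a homomorphism exists as soon as every finite substructure of $\bB^2$ maps homomorphically into $\bB$; moreover, because there are only finitely many orbits of tuples, one may additionally prescribe finitely many constraints (the essentiality witnesses) and still glue. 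So it suffices to produce, for each finite $A \subseteq B^2$, a partial map $A \to \bB$ that preserves all relations on $A$ and that, once $A$ is large enough, realizes the two disequalities witnessing dependence on both coordinates.

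For the local step I would turn the $n$-ary $g$ into a binary map by feeding the two coordinate projections into two arguments on which $g$ genuinely depends (such a pair exists since $g$ is essential, hence not essentially unary), and filling the remaining $n-2$ arguments with elements supplied by the OEP. The guiding example is the median polymorphism of $(\mathbb Q;<)$: taking $O$ to be the order orbital, one chooses for a finite $A$ an element $c$ that is an $O$-successor of every coordinate occurring in $A$, and then $\mathrm{median}(x,y,c)$ collapses on $A$ to $\max(x,y)$, a map that preserves the relations and manifestly depends on both arguments. In general the filler elements are to be chosen, via the OEP, as common $O$-successors of the inputs appearing in $A$; this is exactly the property that keeps the filled tuples inside every relevant relation (so that preservation is inherited from $g$ being a polymorphism) while the two-coordinate dependence of $g$ survives the compression into a binary operation.

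Finally I would assemble the global operation: the partial homomorphisms on the finite substructures of $\bB^2$ are combined by the standard compactness/$\omega$-categoricity argument (as in the construction of the binary injective polymorphism for Theorem~\ref{thm:hdr}) into a total homomorphism $f\colon \bB^2 \to \bB$, i.e.\ a binary polymorphism; since a large enough $A$ forces both dependence witnesses, $f$ is essential. The main obstacle is the local step, and specifically reconciling its two demands at once: the fillers must be generic enough that all filled tuples stay in the relations (otherwise $f$ fails to preserve even $\neq$, just as plugging constants into $g$ would), yet structured enough that $g$'s dependence on two arguments is not washed out. The delicate point is that the OEP as stated only yields a \emph{common} $O$-successor of \emph{two} elements, whereas a naive filling wants one beyond a whole finite set; the way around this is presumably to apply the OEP pairwise and reduce the arity of $g$ by one at a time, each reduction producing an essential polymorphism of smaller arity and needing only the pairwise form of the OEP, until arity $2$ is reached.
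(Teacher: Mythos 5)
Your proposal fails at its local step, and in a way the OEP cannot repair; note also that the paper itself does not prove this lemma but imports it from~\cite{Bodirsky-HDR-v8}, where the proof is a minimal-arity argument of a fundamentally different shape than yours. The gap: for the compressed map $h(x,y)=g(x,y,c_3,\dots,c_n)$ to preserve an $m$-ary relation $R$ \emph{because $g$ is a polymorphism}, every row fed into $g$ must lie in $R$; the rows contributed by the fillers are constant tuples $(c_j,\dots,c_j)$ (or, if you let fillers vary from point to point, tuples of fillers chosen with no coordination across points), and these never lie in $R$ when $R$ is irreflexive. Since every structure with Property~$J$ has $\neq$ in its signature (and the motivating examples also have $<$), the implication you want to invoke has an unsatisfiable hypothesis: no amount of ``genericity'' puts a constant tuple into $\neq$. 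The OEP condition $(a,c)\in O$ relates the filler to elements of $A$; it says nothing about membership of filler rows in the relations of $\bB$, which is what ``preservation is inherited from $g$'' would require. Your median example is not an instance of a general principle: on $A$ the map $\mathrm{median}(\cdot,\cdot,c)$ coincides with $\max$, and $\max$ preserves $<$ because $\max$ is itself a polymorphism of $(\Q;<)$ --- a fact about that particular structure, not a consequence of median being a polymorphism plus the choice of $c$ (and indeed $\max$ fails to preserve $\neq$, so such a compression is not even a partial polymorphism once $\neq$ is present). A second, independent gap is essentiality: $g$ depends on its first argument only at \emph{particular} values of the remaining arguments, and nothing forces those witness values to be realizable with arguments $3,\dots,n$ pinned to fillers; ``the two-coordinate dependence survives the compression'' is precisely the statement that needs proof --- it is the actual content of K\'ara's lemma --- and your text only asserts it.

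The workaround you gesture at in your final sentences (apply the OEP pairwise and lower the arity one step at a time) is in fact the skeleton of the proof in~\cite{Bodirsky-HDR-v8}: take an essential polymorphism $f$ of minimal arity $k$, suppose $k\geq 3$, and produce an essential polymorphism of arity $k-1$, contradicting minimality. But observe how that proof avoids both of your gaps: the smaller-arity operation is built from $f$ by composing with automorphisms and identifying variables, so it is \emph{automatically} a polymorphism and preservation never has to be argued; the OEP --- together with the fact that $O$ is a single orbit, so that automorphisms moving one $O$-pair to another exist --- is used, roughly, to align a witness for dependence on one argument with a witness for dependence on another so that both survive the identification. That argument needs neither compactness nor $\omega$-categoricity (which is not among the hypotheses of Lemma~\ref{lem:kara}, although your gluing step requires it; the gluing itself, borrowed from the proof of Theorem~\ref{thm:hdr}, is the unproblematic part of your outline). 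As it stands, your proposal defers exactly this reduction step to the word ``presumably'', so the core of the proof is missing.
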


To apply this lemma to the generic combination $\bB$ of $\bB_1$ and $\bB_2$, we have to verify that $\Aut(\bB)$ has the OEP. 

\begin{lemma}\label{lem:orb-ext}
Any 2-set-transitive permutation group on a set with at least 4 elements has the OEP. 
\end{lemma}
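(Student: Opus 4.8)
The plan is to prove the statement directly from the definitions. Let $G$ be a $2$-set-transitive permutation group on a set $B$ with $|B| \geq 4$. I need to exhibit an orbital $O$ (a non-trivial orbit of pairs under $G$) with the property that for all $b_1, b_2 \in B$ there is some $c \in B$ with $(b_1, c) \in O$ and $(b_2, c) \in O$.

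First I would observe that $2$-set-transitivity forces $G$ to have essentially only one nontrivial orbital structure on unordered pairs: for any two $2$-element subsets $\{x, y\}$ and $\{x', y'\}$ there is $g \in G$ with $g(\{x,y\}) = \{x', y'\}$. Consequently there are at most two non-trivial orbitals of ordered pairs, and they are paired up by transposition $(b_1, b_2) \mapsto (b_2, b_1)$; either there is a single symmetric orbital, or there are exactly two orbitals that are reverses of each other. In either case I can pick a candidate orbital $O$ and aim to verify the OEP for it. The natural choice of witness $c$ for a given pair $b_1, b_2$ should be found by applying $2$-set-transitivity to move a known good configuration into place.

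The key step is to first produce, using $|B| \geq 4$, a single instance of the desired configuration: three distinct points $b_1, b_2, c$ such that $(b_1, c)$ and $(b_2, c)$ lie in the same non-trivial orbital $O$. Since $|B| \geq 4$ I can choose four distinct elements and, after applying a suitable $g \in G$ coming from $2$-set-transitivity to align pairs, arrange that $c$ sees both $b_1$ and $b_2$ via the same orbital — here the fact that $2$-set-transitivity acts transitively on unordered pairs is what lets me equalize the orbital type of $(b_1, c)$ and $(b_2, c)$. Once this single seed configuration exists, I would use $2$-set-transitivity again to transport it: given an arbitrary pair $b_1', b_2' \in B$, apply the group element mapping the unordered pair $\{b_1, b_2\}$ (or, in the degenerate case $b_1' = b_2'$, a single point via transitivity, which $2$-set-transitivity implies) to $\{b_1', b_2'\}$, and the image of $c$ serves as the witness. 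I should handle separately the trivial sub-case $b_1 = b_2$, where $2$-set-transitivity implies $1$-transitivity (transitivity) so that some suitable $c$ with $(b_1, c) \in O$ exists directly.

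I expect the main obstacle to be the bookkeeping around the two-versus-one orbital dichotomy: when $O$ is not symmetric, I must be careful that the group element used to transport the seed configuration maps the \emph{ordered} relationship correctly, since $2$-set-transitivity only controls unordered pairs and may reverse orientations. The resolution is that because both $(b_1, c)$ and $(b_2, c)$ lie in the \emph{same} orbital $O$ in the seed, any $g$ realizing the unordered-pair move sends them to two pairs that are either both in $O$ or both in its reverse $O^{-1}$; in the latter case I simply replace $O$ by $O^{-1}$ as my chosen orbital, or equivalently swap the roles of $b_1'$ and $b_2'$, so the property is preserved. This symmetry argument, together with the seed-construction step enabled by $|B| \geq 4$, is what makes the proof go through.
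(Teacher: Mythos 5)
Your overall architecture (the dichotomy between the 2-transitive case and the tournament case, finding a ``seed'' configuration, then transporting it by 2-set-transitivity) matches the paper's proof, and your transport step is essentially the paper's: given a seed with $(b_1,c),(b_2,c)\in O$ and an arbitrary distinct pair $b_1',b_2'$, the element $g$ with $g(\{b_1,b_2\})=\{b_1',b_2'\}$ yields the witness $g(c)$, with no orientation problem at all, since orbitals are $G$-orbits and hence every $g\in G$ maps $O$ into $O$. (Your worry that the transported pairs might land in $O^{-1}$, and your proposed fix, reflect a misunderstanding on this point, but it is harmless there.)

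The genuine gap is in the seed construction, which is the heart of the lemma. You claim that ``the fact that 2-set-transitivity acts transitively on unordered pairs is what lets me equalize the orbital type of $(b_1,c)$ and $(b_2,c)$.'' No application of a group element can ever do this: every $g\in G$ preserves both non-trivial orbitals, so it maps a configuration in which $(b_1,c)$ and $(b_2,c)$ have different orbital types to another configuration with different orbital types. Either the desired configuration (three distinct points with both ordered pairs in the \emph{same} orbital) exists in $B$ or it does not, and the group action alone cannot manufacture it. What is needed---and what the paper does, and where the hypothesis $|B|\geq 4$ actually enters---is a combinatorial counting argument: in the non-2-transitive case there are exactly two non-trivial orbitals $O$ and $O^{-1}$, so $(B;O)$ is a tournament; among any 4 vertices the in-degrees sum to $6>4$, hence some vertex $x$ has two incoming edges $(y,x),(u,x)\in O$, and the triple $y,u,x$ is the seed. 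Note that 4 is sharp here: a 3-cycle has all in-degrees equal to 1, so no seed need exist among 3 points. You invoke $|B|\geq 4$ only to ``choose four distinct elements'' but never use them in any counting, so as written your proof does not go through; adding the pigeonhole step would repair it and would make it essentially identical to the paper's argument.
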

\begin{proof}
  Let $G$ be a 2-set-transitive permutation group on a set $B$. 
If $G$ is even 2-transitive then the statement is obvious. Otherwise, 
observe that there are exactly two orbitals $O$ and $P$ such that 
$x \neq y$ if and only if $(x,y) \in O$ and $(y,x) \in P$. Let $b_1,b_2 \in B$ be distinct. 
Consider the tournament induced on $B$ by the edge relation $O$ and let $(\set{x,y,u,v}; O)$ be any 4-element substructure of $(B; O)$. There exists a node in $\set{x,y,u,v}$, say $x$, with two incoming edges, say $(y,x), (u,x)\in O$. By 2-set-transitivity there exists $\alpha \in G$ such that $\alpha(\set{b_1, b_2}) = \set{y,u}$. Choose $c \ceq \alpha^{-1}(x)$ and we get $(b_1, c), (b_2, c) \in O$.
\end{proof}

\begin{lemma}\label{lem:free-orb-ext}
Let $\bB$ be a generic combination of two $\omega$-categorical relational structures $\bB_1$ and $\bB_2$ with the OEP. Then
$\bB$ has the OEP.
\end{lemma}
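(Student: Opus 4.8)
The plan is to produce an orbital $O$ of $\bB$ that witnesses the OEP, built from the orbitals $O_1$ of $\bB^{\tau_1} \cong \bB_1$ and $O_2$ of $\bB^{\tau_2} \cong \bB_2$ that witness the OEP of the two factors. These orbitals are nontrivial, so every pair in $O_i$ consists of distinct elements. First I would locate a single $\bB$-orbital that ``combines'' $O_1$ and $O_2$: applying the free combination property~\eqref{eq:free} to a pair from $O_1$ and a pair from $O_2$, I obtain an injective pair $(x,x')$ that lies in $O_1$ when read as a $\tau_1$-pair and in $O_2$ when read as a $\tau_2$-pair. Set $O \ceq \Aut(\bB)(x,x')$. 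By the splitting property~\eqref{eq:splits}, every injective pair lying simultaneously in $O_1$ and $O_2$ belongs to this single $\bB$-orbit, so $O$ is exactly the set of such pairs.

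Next, given arbitrary $b_1, b_2 \in B$, I would produce the required common extension in each signature separately and then merge. The OEP of $\bB^{\tau_1}$ yields $c_1$ with $(b_1,c_1),(b_2,c_1) \in O_1$, and the OEP of $\bB^{\tau_2}$ yields $c_2$ with $(b_1,c_2),(b_2,c_2) \in O_2$; in both cases $c_i \notin \{b_1,b_2\}$ because $O_1,O_2$ are nontrivial (as noted after the definition of the OEP, the extension point is always distinct from the two given points).

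The crux is to merge $c_1$ and $c_2$ into one element that works for both signatures, and for this I would apply Lemma~\ref{lem:genericPropertiesWithParameters} with parameter tuple $(b_1,b_2)$ and the singleton injective tuples $c_1$ and $c_2$ (whose entries are distinct from the parameters, as just observed). Property~\eqref{eq:freeWithParams} then gives an element $c$ together with an automorphism in $\Aut(\bB^{\tau_1})$ fixing $b_1,b_2$ and sending $c_1$ to $c$, and an automorphism in $\Aut(\bB^{\tau_2})$ fixing $b_1,b_2$ and sending $c_2$ to $c$. Consequently $(b_1,b_2,c)$ lies in the $\tau_1$-orbit of $(b_1,b_2,c_1)$ and in the $\tau_2$-orbit of $(b_1,b_2,c_2)$, so $(b_1,c),(b_2,c) \in O_1$ and $(b_1,c),(b_2,c) \in O_2$. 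By the characterization of $O$ from the first step, $(b_1,c),(b_2,c) \in O$, which is exactly what the OEP for $\bB$ demands.

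The main obstacle is precisely this merging step: the two factors supply extension points $c_1$ and $c_2$ independently, and there is no reason a single element should serve both signatures unless one exploits the genericity of the combination. The parameterized free-combination property of Lemma~\ref{lem:genericPropertiesWithParameters}, with $b_1,b_2$ held fixed as parameters, is exactly the tool that forces the two orbit conditions to be realizable at one common point; the only care needed is to verify the injectivity and disjointness hypotheses of that lemma, which follow from the nontriviality of $O_1$ and $O_2$.
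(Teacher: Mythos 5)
Your proof is correct and follows essentially the same route as the paper: the paper also takes the witnessing orbital to be $O_1 \cap O_2$ (your $O$) and merges the two extension points $c_1, c_2$ via Lemma~\ref{lem:genericPropertiesWithParameters} with $(b_1,b_2)$ as parameters. The only difference is that you spell out, via properties~\eqref{eq:free} and~\eqref{eq:splits}, why $O_1 \cap O_2$ is a single $\Aut(\bB)$-orbital, which the paper asserts without detail.
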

\begin{proof}
For $i=1$ and $i=2$, 
let $O_i$ be the orbital that witnesses the OEP in $\bB_i$. Then $P \ceq O_1 \cap O_2$ is an 
orbital in $\bB$, and we claim that it
witnesses the OEP in $\bB$. Let $b_1,b_2$ be elements of $\bB$.
Choose $c_i$ such that $(b_1,c_i) \in O_i$ and $(b_2,c_i) \in O_i$ for $i=1$ and $i=2$. Because $c_1,c_2$ will be distinct from $b_1, b_2$, Lemma~\ref{lem:genericPropertiesWithParameters} yields that there exists $c\in \Aut_{b_1, b_2}(\bB_1)c_1 \cap \Aut_{b_1, b_2}(\bB_2)c_2$, which implies 
$(b_1,c) \in P$ and $(b_2,c) \in P$.
\end{proof}


Finally, we prove Theorem~\ref{thm:r2}. \hyperlink{def:J}{Property~$J$} from Definition~\ref{def:J} is needed in order to apply Proposition~\ref{prop:key} twice.

\begin{proof}[Proof of Theorem~\ref{thm:r2}]
If all polymorphisms of $\bB$ are essentially unary then Proposition~\ref{prop:ecsp-hard} shows that $\Csp(\bB)$ is NP-hard.
Otherwise, $\bB$ has a binary essential polymorphism by Lemma~\ref{lem:kara}, because $\bB$ has the OEP by Lemma~\ref{lem:orb-ext} and Lemma~\ref{lem:free-orb-ext}. 
 \hyperlink{def:J}{Property~$J$} states that $\bB_i$, for $i=1$ and $i=2$, is 2-set-transitive, contains a binary relation symbol that denotes $\neq$ and is without algebraicity. This implies that $\bB_1$ and $\bB_2$ satisfy the assumptions of Proposition~\ref{prop:key}.
It follows that $\bB_1$ has a binary injective polymorphism. By Theorem~\ref{thm:hdr},
this shows that $\Th(\bB_1)$ is convex.

By symmetry also $\Th(\bB_2)$ is convex.
Now, the Nelson-Oppen combination procedure
implies that $\Csp(\bB)$ is in P. 
\end{proof}

\section{Examples} \label{sec:examples}

We will now present examples of structures with \hyperlink{def:J}{Property~$J$}.


\begin{expl}\label{ex:mi}
All first-order expansions of $(\Q; \neq, <)$ have \hyperlink{def:J}{Property~$J$}.
It is easy to check that $(\Q; <)$ is 2-set transitive, $\omega$-categorical and without algebraicity. These properties are generally preserved when expanding a structure with first-order definable relations. 
A cross prevention formula for these structures is
\begin{displaymath}
  \phi(x,y,u,v) \cequiv x < v \AND y > u.
\end{displaymath}

Furthermore, all polynomial-time tractable first-order expansions of $(\Q; <)$ are known (see~\cite{tcsps-journal}) and there exist tractable expansions which are not convex. One such structure is $({\mathbb Q}; \neq, \leq, R_{\mi})$ with
\begin{displaymath}
  R_{\mi} \ceq \{(x,y,z) \in {\mathbb Q}^3 \mid x \geq y \vee x > z\}.
\end{displaymath}
\end{expl}

\begin{expl}
  Let $C$ be the binary branching homogeneous $C$-relation on a countably infinite set \cite{BodJonsPham}. The computational complexity of the $\Csp$s of all first-order expansions of this structure has been fully classified in \cite{Phylo-Complexity}. All these first-order expansions (some of which are polynomial-time tractable) have \hyperlink{def:J}{Property~$J$}. Here, a cross prevention formula is $\phi(x,y,u,v) \cequiv\exists a\big( C(x,u; a) \AND C(y,a; v)\big)$.
  
\end{expl}

Furthermore, we can describe a large family of structures with \hyperlink{def:J}{Property~$J$}:
\begin{lemma}\label{lem:generalExplForJ}
Let $\bB$ be a countably infinite homogeneous relational structure
with finitely many relations, each of arity at least 3 where each tuple in every relation of $\bB$ has pairwise distinct entries. Then $\bB$ is 2-set-transitive. 
\end{lemma}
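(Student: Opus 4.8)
The plan is to exploit the arity and injectivity hypotheses to show that every two-element subset of $B$ induces the same (trivial) substructure, so that homogeneity supplies the required automorphisms directly. First I would observe that if $R$ is any relation of $\bB$, then no tuple all of whose entries lie in a fixed two-element set $\{b_1,b_2\}$ can belong to $R$: such a tuple has arity at least $3$ but takes at most two distinct values, so by pigeonhole two of its coordinates coincide, contradicting the assumption that every tuple in every relation of $\bB$ has pairwise distinct entries. Consequently, the substructure of $\bB$ induced on any two-element subset carries no relations at all.

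Next, given two-element subsets $\{b_1,b_2\}$ and $\{c_1,c_2\}$ of $B$, I would consider the bijection $f$ defined by $f(b_1)=c_1$ and $f(b_2)=c_2$. By the previous paragraph both induced substructures are relation-free, so $f$ is trivially an isomorphism between the substructure induced on $\{b_1,b_2\}$ and the one induced on $\{c_1,c_2\}$. Homogeneity of $\bB$ then yields an automorphism $\alpha\in\Aut(\bB)$ extending $f$, and this $\alpha$ satisfies $\alpha(\{b_1,b_2\})=\{c_1,c_2\}$. Since the two subsets were arbitrary, $\Aut(\bB)$ is $2$-set-transitive, as required.

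I do not expect any genuine obstacle here: the whole argument rests on the elementary pigeonhole observation that a relation of arity at least $3$ with pairwise distinct entries cannot be realised inside a two-element set, after which homogeneity does all the work. It is worth remarking that, since \emph{any} bijection between two-element sets is an isomorphism of the induced (relation-free) substructures, the same argument in fact sends the ordered pair $(b_1,b_2)$ to $(c_1,c_2)$ and thereby establishes full $2$-transitivity, which is strictly stronger than the claimed $2$-set-transitivity; I would nonetheless state only the weaker conclusion, as that is all Property~$J$ requires.
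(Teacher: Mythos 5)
Your proof is correct and follows essentially the same route as the paper's: both arguments observe that the arity and injectivity hypotheses force every two-element subset to induce a relation-free substructure, so that any bijection between such subsets is an isomorphism, which homogeneity extends to an automorphism. Your pigeonhole justification merely spells out a step the paper leaves implicit, and your closing remark that the argument actually yields $2$-transitivity is also implicit in the paper's proof, which maps the ordered pair $(a,b)$ to $(c,d)$.
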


\begin{proof}
  As $\bB$ is homogeneous with finite relational signature, $\bB$ must be $\omega$-categorical.
  For 2-set-transitivity, consider any four elements $a,b,c,d \in B$ such that $a\neq b$ and $c\neq d$. Because no relation from the signature holds on $a,b$ or $c,d$,
there is an isomorphism between the substructure induced by $\set{a,b}$ in $\bB$ and the substructure induced by $\set{c,d}$ in $\bB$ that maps $(a, b)$ to $(c, d)$. Due to homogeneity of $\bB$, this isomorphism extends to an automorphism of $\bB$. Therefore, $\bB$ is 2-set-transitive.
\end{proof}


\begin{expl}
  An example for a structure $\bB$ satisfying the conditions of Lemma~\ref{lem:generalExplForJ} is the \Fresse -limit of all finite 3-uniform hypergraphs which do not embed a tetrahedron (a surface of the tetrahedron corresponds to a hyperedge). Let $\bB$ be this limit structure with its ternary edge relation $E$ and add $\neq$ to the signature. Then $\bB$ is 2-set-transitive by Lemma~\ref{lem:generalExplForJ}.    
Furthermore, a cross prevention formula for $\bB$ is
  \begin{displaymath}
    \phi(x,y,u,v) \cequiv \exists a,b \big( E(x,a,b) \AND E(u,a,b) \AND E(x,u,a) \AND E(y,v,b) \big).
  \end{displaymath}
  This formula describes a tetrahedron between $a,b,x,u$, except that the edge $E(x,u,b)$ is replaced by $E(y,v,b)$. It is easy to check that both $\phi(x,y,u,v) \AND x=y$ and $\phi(x,y,u,v) \AND u=v$ allow solutions $s_1, s_2$ respectively, such that $s_1(x,u,v)$ and $s_2(x,y,u)$ are injective  but $\phi(x,y,u,v) \AND x=y \AND u=v$ is unsatisfiable. Furthermore, $\Age(\bB)$ has the strong amalgamation property and therefore, $\bB$ does not have algebraicity and hence it has \hyperlink{def:J}{Property~$J$}  by Proposition~\ref{prop:noAlgStrongAmg}.
  
The example can be generalized to work with any complete 3-uniform hypergraph on $n$ vertices for $n\geq 4$ instead of a tetrahedron (i.e., $n=4$).
\end{expl}




To give examples for Theorem~\ref{thm:r2} that are not covered by the Nelson-Oppen combination criterion we need structures with \hyperlink{def:J}{Property~$J$}, with polynomial-time tractable CSP, and without convexity. Here is one such structure and an ostensibly harmless combination partner: 

\begin{expl}\label{expl:mi-comb}
Let $\bB_1$ be the relational structure $({\mathbb Q};\neq, <,R_{\mi})$ 
where $R_{\mi}$ is defined as in Example~\ref{ex:mi}.      
Let $\bB_2 \ceq ({\mathbb Q}; \prec)$   
where $\prec$ also denotes the strict order of the rationals 
(we chose a different symbol than $<$ to make the signatures disjoint). 
It is 
easy to see that $\bB_1$ and $\bB_2$ satisfy
the assumptions of Proposition~\ref{prop:r1}, 
so they have a generic combination $\bB$.

From Example~\ref{ex:mi} we know that $\bB_1$ has \hyperlink{def:J}{Property~$J$} and both have polynomial-time tractable CSPs.
Notice that $R_{\mi}$ prevents binary injective polymorphisms, i.e., it ensures that $\bB_1$ does not have a convex theory (see Theorem~\ref{thm:hdr}). To apply Theorem~\ref{thm:r2} we need a symbol for the disequality relation in $\bB_2$. Let $\not \approx$ be another symbol for the disequality relation on $\bB_2$. Then $\bB_1 \ast (\Q, \not \approx, \prec)$ has the same CSP as $\bB_1 \ast \bB_2$, because $\not \approx$ and $\neq$ will denote the same relation in $\bB_1 \ast (\Q, \not \approx, \prec)$. Hence, Theorem~\ref{thm:r2} implies that the CSP of $\bB_1 \ast \bB_2$ is NP-complete (and we invite the reader to find an NP-hardness proof without using our theorem).

This example shows how expansion of the signature, application of Theorem~\ref{thm:r2}, and subsequent reduction of the signature can extend the scope of Theorem~\ref{thm:r2}.
\end{expl}

Concluding this section, we would like to state the following corollary.
\begin{corollary}\label{cor:tractCombQ}
Let $\bB_1$ and $\bB_2$ be 
two first-order expansions of $({\mathbb Q};<,\neq)$ with finite relational signatures. Rename the relations of $\bB_1$ and $\bB_2$ so that $\bB_1$ and $\bB_2$ have disjoint signatures. 
Then $\Csp(\Th(\bB_1) \cup \Th(\bB_2))$ is in P if 
$\Csp(\bB_1)$ and $\Csp(\bB_2)$ are in P and if both $\Th(\bB_1)$ and $\Th(\bB_2)$ are convex.
Otherwise, $\Csp(\Th(\bB_1) \cup \Th(\bB_2))$ is NP-hard. 
\end{corollary}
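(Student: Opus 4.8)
The plan is to read the statement as a direct instance of Theorem~\ref{thm:r2}, together with a trivial reduction to cover the cases where one of the component CSPs is already NP-hard. First I would record that every first-order expansion $\bB_i$ of $(\Q;<,\neq)$ has \hyperlink{def:J}{Property~$J$}: this is precisely Example~\ref{ex:mi}, which supplies the cross prevention formula $x<v \AND y>u$ and the facts that $\bB_i$ is $\omega$-categorical, $2$-set-transitive, without algebraicity, and carries a symbol denoting $\neq$. Since both $\bB_1$ and $\bB_2$ are without algebraicity, Proposition~\ref{prop:r1} guarantees that the generic combination $\bB \ceq \bB_1 \ast \bB_2$ exists and is again $\omega$-categorical, with $\bB^{\tau_1}\cong \bB_1$ and $\bB^{\tau_2}\cong \bB_2$.

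The key point, on which everything else rests, is the identification $\Csp(\bB)=\Csp(\Th(\bB_1)\cup\Th(\bB_2))$ (writing $T_i \ceq \Th(\bB_i)$). One inclusion is immediate because $\bB \models T_1 \cup T_2$. For the converse I would argue as follows: given a finite set $S$ of atomic $(\tau_1\cup\tau_2)$-formulas that is satisfiable modulo $T_1\cup T_2$, the equalities occurring in $S$ induce an equivalence on its variables; after passing to representatives one rewrites $S$ on pairwise distinct variables $\bar y$ as $\phi_1(\bar y)\AND \phi_2(\bar y)$, with $\phi_1$ a conjunction of $\tau_1$-atoms and $\phi_2$ a conjunction of $\tau_2$-atoms. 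In a model of $T_1\cup T_2$ realising $S$, the $\tau_1$-reduct witnesses $\exists \bar y\,(\phi_1(\bar y)\AND\bigwedge_{i<j}y_i\neq y_j)$ and, since $T_1$ is complete by $\omega$-categoricity, $T_1\models\exists\bar y\,(\phi_1\AND\bigwedge_{i<j}y_i\neq y_j)$; symmetrically for $T_2$. By item~\eqref{item:satisfiableTogether} of the generic combination this yields satisfiability of $\phi_1\AND\phi_2$ over $\bB$, hence of $S$. I would also note that after the disjoint renaming the two disequality symbols both denote genuine disequality on the domain of $\bB$ (exactly as in Example~\ref{expl:mi-comb}), so no clash between them arises.

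With this identification the case analysis is routine. If both $\Csp(\bB_1)$ and $\Csp(\bB_2)$ lie in P, then Theorem~\ref{thm:r2} applies verbatim: if $\Th(\bB_1)$ and $\Th(\bB_2)$ are both convex then $\Csp(\bB)$ is in P, and if either fails convexity then $\Csp(\bB)$ is NP-hard. If instead, say, $\Csp(\bB_1)$ is NP-hard, then restricting instances of $\Csp(\bB)$ to atomic $\tau_1$-formulas realises $\Csp(\bB_1)=\Csp(\bB^{\tau_1})$ as a special case, so $\Csp(\bB)$ is NP-hard as well. Assembling the cases gives $\Csp(\Th(\bB_1)\cup\Th(\bB_2))\in\text{P}$ exactly when both component CSPs are in P and both theories are convex, and NP-hardness otherwise, which is the asserted dichotomy.

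I expect the only genuinely delicate step to be the identification $\Csp(\bB)=\Csp(\Th(\bB_1)\cup\Th(\bB_2))$ in the second paragraph: the bookkeeping of equalities when passing to variable representatives, and the invocation of completeness together with item~\eqref{item:satisfiableTogether}, is where the argument has content. Once that is in place, the remainder is a direct appeal to Example~\ref{ex:mi}, Proposition~\ref{prop:r1}, and Theorem~\ref{thm:r2}.
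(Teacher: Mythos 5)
Your overall route is exactly the paper's: the paper proves this corollary in one sentence by combining Example~\ref{ex:mi} (Property~$J$ for first-order expansions of $(\Q;<,\neq)$), Proposition~\ref{prop:r1} (existence of the generic combination), and Theorem~\ref{thm:r2}, leaving the identification $\Csp(\bB)=\Csp(\Th(\bB_1)\cup\Th(\bB_2))$ to the general framework of Section~\ref{sect:qcsps}. You correctly single that identification out as the step with real content, but your argument for it has a genuine flaw. You pass to representatives of the equivalence relation generated by the \emph{syntactic} equality atoms of $S$, and then assert that a model of $T_1\cup T_2$ realising $S$ witnesses $\exists \bar y\,(\phi_1(\bar y)\AND\bigwedge_{i<j}y_i\neq y_j)$. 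That is false in general: a first-order expansion of $(\Q;<,\neq)$ may contain relations that force equalities without any equality atom occurring in $S$. Concretely, take $\bB_1=(\Q;<,\neq,\leq)$ and $S=\{x\leq y,\;y\leq x\}$: this is satisfiable modulo $T_1\cup T_2$, your identification step leaves $x$ and $y$ distinct, yet $\exists x,y\,(x\leq y\AND y\leq x\AND x\neq y)$ fails, so neither completeness of $T_1$ nor item~\eqref{item:satisfiableTogether} (which speaks only about \emph{injective} satisfiability) can be invoked, and your argument cannot conclude even though $S$ is satisfiable over $\bB$. The repair is to identify variables according to the \emph{kernel of a satisfying assignment} $s$ in the model of $T_1\cup T_2$ (an equivalence that may be strictly coarser than the syntactic one); on those representatives $s$ is injective, and then your appeal to completeness of the $\omega$-categorical theories $T_i$ and to item~\eqref{item:satisfiableTogether} goes through, after which the identifications are undone.

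A second, smaller hole: the corollary's ``otherwise'' branch includes the case that some $\Csp(\bB_i)$ is \emph{not in P}, whereas you only treat the case that some $\Csp(\bB_i)$ is \emph{NP-hard}. These coincide only via the classification of temporal CSPs in~\cite{tcsps-journal}, by which every CSP of a first-order expansion of $(\Q;<)$ is in P or NP-complete; without citing that dichotomy, the case ``not in P but not NP-hard'' is uncovered and your closing ``exactly when'' overstates what was shown. (In fairness, the paper's own one-line proof is silent on this point too; one can also observe that the NP-hardness branch of the proof of Theorem~\ref{thm:r2} never uses the assumption that the component CSPs are in P, which handles the non-convex case without any tractability hypothesis, but the case of two convex theories with a component CSP outside P still needs the dichotomy of~\cite{tcsps-journal}.)
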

This follows from Proposition~\ref{prop:r1} which characterises the existence of a generic combination of $T_1$ and $T_2$, and from Theorem~\ref{thm:r2} which classifies the computational complexity of the generic combination.

\section{Difficulties for a General Complexity Classification}
\label{sect:non-class}
Let $T_1$ and $T_2$ be $\omega$-categorical theories with disjoint finite relational signatures such that
$\Csp(T_1)$ is in P and $\Csp(T_2)$ is in P. 
The results in this section suggest that 
 in general we cannot hope to get a classification 
of the complexity of $\Csp(T_1 \cup T_2)$. We use the result from~\cite{BodirskyGrohe}
that there are homogeneous directed graphs $\bB$ such that $\Csp(\bB)$ is undecidable. 
There are even homogeneous directed graphs
$\bB$ 
such that $\Csp(\bB)$ is \emph{coNP-intermediate}, i.e.,  
in coNP, but neither coNP-hard nor in P~\cite{BodirskyGrohe} (unless P = coNP).
All of the homogeneous graphs $\bB$ used in~\cite{BodirskyGrohe} can be described
by specifying a set of finite tournaments $\mathcal T$. Let $\mathcal C$ be the class of all finite directed loopless graphs $\bA$ such that no tournament from $\mathcal T$ embeds into $\bA$. It can be checked
that $\mathcal C$ is a strong amalgamation class;
the \Fresse-limits of those classes are called the
\emph{Henson digraphs}.

\begin{proposition}\label{prop:non-class}
For every Henson digraph $\bB$  
there exist $\omega$-categorical convex theories $T_1$ and $T_2$ with disjoint finite relational signatures such that 
$\Csp(T_1)$ is in P, $\Csp(T_2)$ is in P, and 
$\Csp(T_1 \cup T_2)$ is polynomial-time Turing equivalent to $\Csp(\bB)$. 
\end{proposition}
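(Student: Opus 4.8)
The plan is to take an arbitrary Henson digraph $\bB$ with edge relation $E$ and reconstruct it as the generic combination of two tailored convex structures whose individual CSPs are easy, so that the hardness of $\Csp(\bB)$ is forced into the combination. The key idea is that a Henson digraph is defined by forbidding a set $\mathcal T$ of finite tournaments, and a tournament on a set of vertices is just a collection of edges together with the assertion that \emph{every} pair of distinct vertices is adjacent. The ``every pair is adjacent'' part is a purely qualitative, order-like condition, while the orientation of each edge is the directional information. The plan is therefore to split these two aspects into $T_1$ and $T_2$: one theory will track the underlying (symmetric) tournament/order structure, and the other will track the orientations, so that only their generic combination can see a forbidden tournament $\mathcal T$ and hence reproduce $\Csp(\bB)$.

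Concretely, I would first let $\bB_1 \ceq (\Q; <, \neq)$ (or a suitable first-order expansion thereof), which is $\omega$-categorical, without algebraicity, and convex (it has a binary injective polymorphism, e.g.\ by Theorem~\ref{thm:hdr}), and whose CSP is in P as a temporal CSP. The role of $\bB_1$ is to supply, for each pair of variables, a linear order that witnesses adjacency: I would encode the edge relation of $\bB$ by combining the order from $\bB_1$ with a second relation coming from $\bB_2$. For $\bB_2$ I would take a structure on $\Q$ whose single relevant relation $S$ is chosen so that, on any pair that $\bB_1$ declares ordered, $S$ records a binary ``colour'' (orientation bit), and so that $\bB_2$ is again $\omega$-categorical, convex, without algebraicity, 2-transitive, with $\Csp(\bB_2)$ in P. The generic combination $\bB_1 \ast \bB_2$ then exists by Proposition~\ref{prop:r1} (both are without algebraicity), and on each injective pair of elements it independently realises both a $<$-direction and an $S$-direction; the conjunction of these two binary pieces of data is exactly an oriented edge, so a pp-definition over $\bB_1\ast\bB_2$ recovers $E$. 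Forbidding the tournaments of $\mathcal T$ then becomes a finite set of constraints, giving a polynomial-time reduction of $\Csp(\bB)$ to $\Csp(T_1\cup T_2)$ and, by reading the construction backwards, a reduction in the other direction as well; the two reductions together yield Turing-equivalence.

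The main obstacle I anticipate is the faithfulness of the encoding: I must ensure that the generic combination does \emph{not} accidentally identify two of the intended edge-orbits or introduce spurious adjacency patterns, so that the forbidden-tournament constraints of $\bB$ translate cleanly. This is exactly where properties~(\ref{eq:free}) and~(\ref{eq:splits}) of the generic combination do the work: (\ref{eq:free}) guarantees that any combinatorially consistent assignment of $<$-directions and $S$-directions to an injective tuple is jointly realisable, which is what lets me build models of $T_1\cup T_2$ from digraphs in $\Age(\bB)$, while (\ref{eq:splits}) guarantees that the orbit of an injective tuple in $\bB_1\ast\bB_2$ is determined \emph{exactly} by its $\tau_1$- and $\tau_2$-orbits, which is what makes the pp-definition of $E$ well-defined and the reduction correct in both directions. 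Verifying convexity of $T_1$ and $T_2$ is routine via Theorem~\ref{thm:hdr} once the structures are fixed; the genuinely delicate point is checking that the pp-definition of the single ternary/binary edge relation, together with the finitely many forbidden-tournament conjunctions, exactly cuts out $\Age(\bB)$ and nothing more, so that satisfiability of an instance of $\Csp(T_1\cup T_2)$ coincides, up to polynomial-time translation, with satisfiability of the corresponding $\bB$-instance.
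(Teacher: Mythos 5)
Your construction cannot work, for two reasons that are independent of the details you left open. First, it is self-defeating: you arrange for both $T_1$ and $T_2$ to be convex, $\omega$-categorical with infinite models (hence stably infinite), to contain a symbol for disequality, and to have CSPs in P. These are exactly the Nelson--Oppen conditions, so the Nelson--Oppen combination method itself puts $\Csp(T_1 \cup T_2)$ in P. It therefore cannot be polynomial-time Turing equivalent to $\Csp(\bB)$ for precisely the Henson digraphs that make the proposition interesting, namely those whose CSP is coNP-intermediate or undecidable~\cite{BodirskyGrohe}. Any correct construction must violate one of the Nelson--Oppen hypotheses; the paper's proof does this by giving $T_1$ a signature that does \emph{not} contain $\neq$.

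Second, the encoding itself is unfaithful. By property~(\ref{eq:free}) of generic combinations, every consistent combination of a $\tau_1$-orbit and a $\tau_2$-orbit on injective tuples is realised in $\bB_1 \ast \bB_2$; consequently any digraph relation $E$ that you pp-define from $<$ and $S$ embeds \emph{every} finite tournament, since orientations can be chosen freely and independently of the order. The Henson digraph, by contrast, omits all tournaments of $\mathcal T$, so the pp-defined relation cannot be a copy of its edge relation, and an instance of $\Csp(\bB)$ that is unsatisfiable because it forces a forbidden tournament translates into a \emph{satisfiable} instance over $\bB_1 \ast \bB_2$. Your remark that ``forbidding the tournaments of $\mathcal T$ becomes a finite set of constraints'' cannot be repaired: CSP instances are conjunctions of atomic formulas and can only assert the presence of structure, never forbid embeddings, and moreover $\mathcal T$ is in general infinite (this is essential for the Bodirsky--Grohe examples). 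The paper's proof goes the opposite way: rather than rebuilding $\bB$ from tame factors, it hides $\bB$ inside $T_1$ directly, taking $T_1 = \Th(\bB_1)$ where $\bB_1$ is $\bB$ with one extra element $a$ carrying a loop $E(a,a)$, and $T_2 = \Th(\N;\neq)$. Then $\Csp(T_1)$ is trivially in P (map every variable to $a$) and $T_1$ is convex, while the disequalities supplied by $T_2$ allow one to force variables away from $a$, unmasking the full complexity of $\Csp(\bB)$ only in the combination.
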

\begin{proof}
Let $\bB_1$ be the structure obtained from $\bB$ by adding a new element $a$ and adding the tuple $(a,a)$ to the edge relation $E$ of $\bB$.
Clearly, $\bB_1$ is $\omega$-categorical and $\Csp(\bB_1)$ is in P. 
Let $\bB_2$ be $(\N, \neq)$. Let $T_i$ be the theory of $\bB_i$, for $i=1$ and $i=2$. We first present a polynomial-time reduction from $\Csp(\bB)$ to $\Csp(T_1 \cup T_2)$.

First observe that $x\mapsto a$ is the only solution to the instance $\set{ E(x,x)}$. Hence, given an instance $S$ of $\Csp(\bB)$ with variables $x_1, \dots, x_n$, we can give the following to an algorithm for $\Csp(T_1\cup T_2)$:
\begin{displaymath}
 S^* \ceq  \set{E(x_0,x_0),\, x_0\neq x_1, \dotsc, x_0\neq x_n} \cup S
\end{displaymath}
If there is a solution $s$ for $S^*$ in $\Csp(T_1\cup T_2)$, then the values $s(x_1),\dotsc, s(x_n)$ are distinct from $a$ and therefore $s|_{\set{x_1, \dotsc, x_n}}$ is a solution for $S$ in $\bB$. Likewise, if there is a solution $s$ for $S$ in $\bB$, then $s$ and $x_0 \mapsto a$ gives a solution for $S^*$ in $\Csp(T_1\cup T_2)$.

Next, we give a polynomial-time reduction from $\Csp(T_1 \cup T_2)$ to $\Csp(\bB)$. Consider an instance $S$ of $\Csp(T_1 \cup T_2)$ and assume without loss of generality that there is no conjunct of the form $x=y$ (substitute $y$ with $x$ otherwise and revert the substitution after the algorithm otherwise). Now we consider the directed graph $(V; R)$ where $V$ is the set of variables occurring in the instance $S$ and $R$ is the set of pairs $(x_i,\, x_j)$ such that $E(x_i,x_j)$ is in $S$.

Let $C_1,\dots, C_n$ be the connected components of $(V; R)$ where two nodes count as connected if there is some edge between them, regardless of its direction. Furthermore define $D\ceq \set{(x_i, x_j) \mid x_i\neq x_j \text{ is a constraint in } S}$ and label all $C_k$ where $\set{ E(x_i,x_j)\in S \mid x_i,x_j\in C_k}$ has no solution in $\bB$ with $a$. The algorithm proceeds as follows: If there exists $(x_i,x_j)\in D$ such that both $x_i$ and $x_j$ are in components labeled with $a$, then reject. Accept the instance otherwise.

To see that this is correct, notice that a connected component labeled $a$ can only be satisfied by setting all variables in that component to $a$. Therefore, all rejected cases must be unsatisfiable.

Now, consider an instance $S'$ of $\Csp(\bB)$ obtained by this reduction. There is a solution to the instance iff no tournament $G_F\in \mathcal{T}$ can be embedded in the graph $G_{S'}$ given by the edge constraints in $S'$. If there is an embedding $f\colon G_F \hookrightarrow G_{S'}$ for some  $G_F\in \mathcal{T}$, then either $E(x_i,x_j)$ or $E(x_j,x_i)$ is in $S'$ for all variables $x_i,x_j$ in the image of $f$. Hence, if we started with an additional constraint $x_i\neq x_j$ (for two variables $x_i,x_j$ with $i\neq j$), removing this constraint would never make an otherwise unsatisfiable instance satisfiable, because the same $G_F$ would still be embeddable. Therefore, any satisfiable instance of $\Csp(\bB)$ is also satisfiable when arbitrary additional $\neq$-constraints between different variables are enforced.
Hence, to construct a solution for the non-rejected cases of $S$, we take an injective solution $s$ for the union of all unlabeled $C_k$ of $S'$ such that $a$ is not in the image of $s$ and map all other variables to $a$. It is easy to check that this satisfies all constraints in $S$.
\end{proof}

We mention that another example of two theories such that  $\Csp(T_1)$ and $\Csp(T_2)$ are decidable but $\Csp(T_1 \cup T_2)$ is not can be found in \cite{BonacinaGhilardiUA2006_DecideNORewrite}.

\medskip 
{\bf Acknowledgements.} We would like to thank Marcello Mamino for many helpful discussions about this material. Furthermore, we thank the anonymous reviewers for their helpful comments.

\bibliographystyle{alpha}

\bibliography{local}

\end{document}